\newtheorem{theorem}{Theorem}
\theoremstyle{plain}
\newtheorem{lemma}[theorem]{Lemma}
\newtheorem{definition}[theorem]{Definition}
\newtheorem{proposition}[theorem]{Proposition}
\newtheorem{corollary}[theorem]{Corollary}
\newtheorem{remark}[theorem]{Remark}
\numberwithin{equation}{section}
\numberwithin{theorem}{section}
\newcommand{\cB}{\mathcal{B}}
\newcommand{\cH}{\mathcal{H}}
\newcommand{\R}{\mathbb{R}}
\newcommand{\N}{\mathbb{N}}
\def\crr{\cr\noalign{\vskip2mm}}
\def\dfrac{\displaystyle\frac}
\def\Tv{\mathbb{|\kern-0.1em|\kern-0.1em|}}
\renewcommand{\geq}{\geqslant}
\renewcommand{\leq}{\leqslant}
\def\o{\omega}
\def\O{\Omega}
\begin{document}
\title
{Optimal Actuator Location of the Norm Optimal Controls for Degenerate Parabolic Equations}

\author{\sffamily Yuanhang Liu$^{1}$, Weijia Wu$^{1,*}$, Donghui Yang$^1$   \\
	{\sffamily\small $^1$ School of Mathematics and Statistics, Central South University, Changsha 410083, China. }
}
	\footnotetext[1]{Corresponding author: weijiawu@yeah.net }

\email{liuyuanhang97@163.com}
\email{weijiawu@yeah.net}
\email{donghyang@outlook.com}

\keywords{Degenerate parabolic equation; norm optimal control; optimal location; game theory}
\subjclass[2020]{35K05; 49J20; 90C47; 93C20}

\maketitle

\begin{abstract}
This paper focuses on investigating the optimal actuator location for achieving minimum norm controls in the context of approximate controllability for degenerate parabolic equations. We propose a formulation of the optimization problem that encompasses both the actuator location and its associated minimum norm control. Specifically, we transform the problem into a two-person zero-sum game problem, resulting in the development of four equivalent formulations. Finally, we establish the crucial result that the solution to the relaxed optimization problem serves as an optimal actuator location for the classical problem. 
\end{abstract}

\pagestyle{myheadings}
\thispagestyle{plain}
\markboth{Optimal Actuator Location For Degenerate Parabolic Equations}{YUANHANG LIU, WEIJIA WU, AND DONGHUI YANG}

\section{Introduction}
The degenerate parabolic equation, belonging to a widely studied class of diffusion equations, holds significant importance in describing various physical phenomena. Examples of such phenomena include laminar flow, large ice blocks, solar radiation-climate interactions, and population genetics (refer to \cite{cannarsa2016global} for more comprehensive descriptions). Consequently, control problems associated with these equations have attracted substantial attention in the research community. Notably, studies on controllability and observability problems for degenerate parabolic equations have been conducted in \cite{cannarsa2016global} and extensively referenced therein.

In recent years, there has been significant interest in shape optimization problems, also known as shape design problems, for both ordinary differential equations (ODEs) and partial differential equations (PDEs). When dealing with control systems governed by differential equations, shape optimization problems manifest in various forms. Unlike lumped parameter systems, where the location of actuators is fixed, in control systems governed by PDEs, the optimal control aims to optimize performance and often allows for flexibility in choosing the actuator location (\cite{morris2010linear}).  The influence of actuator location on noise reduction performance was demonstrated using a simplified duct model in \cite{morris1998noise}.
To determine the optimal actuator location for abstract infinite-dimensional systems and minimize the cost functional under the worst initial condition, an approximation scheme was proposed in \cite{morris2010linear}.
The actuator location problem has received significant attention from researchers in various contexts, with a focus on one-dimensional PDEs, as previously explored in \cite{allaire2010long,darivandi2013algorithm,hebrard2005spillover,privat2013optimal,privat2015complexity,wouwer2000approach}.
Numerical investigations play a crucial role in this research area, providing valuable insights alongside other perspectives \cite{allaire2010long,muench2008optimal,munch2009optimal,munch2011optimal,tiba2011finite}. In \cite{munch2006optimal}, the author investigated the optimization problem involving the shape and position of the damping set for achieving internal stabilization of a linear two-dimensional wave equation.
In \cite{munch2013numerical}, the authors focused on the numerical approximation of null controls with minimal $L^\infty$-norm for a linear heat equation, taking into account a bounded potential.
An intriguing study was presented in \cite{privat2015complexity}, which dealt with the determination of a measurable subset that maximizes the $L^2$ norm of the restricted solution to a homogeneous wave equation over a bounded, connected subset during a finite time interval. In \cite{guo2015optimal}, the research focused on shape optimization problems concerning the norm optimal and time optimal control of null-controlled heat equations. However, the controlled domains considered in \cite{guo2015optimal} were restricted to a specific class of open subsets measured using the Hausdorff metric. Similar limitations were also observed in the shape optimization problems discussed in \cite{guo2012some,guo2013convergence}.
Recently, the study presented in \cite{privat2015complexity} addressed optimal shape and location problems of sensors for parabolic equations with random initial data. It is worth noting that the study showed in \cite{yang2016observability,yang2017optimal} considered the shape optimization problems for stochastic heat equations.

Thus far, no research has been conducted on the optimization of shape problems for degenerate parabolic equations. This study represents the inaugural attempt to address such problems within the framework of degenerate parabolic equations. Expanding upon the groundwork laid by \cite{xu2023existence}, which examined a shape design problem for approximate controllable heat equations, our objective is to explore a shape design problem for approximate controllability in the context of degenerate parabolic equations. In this particular context, the varying domains correspond to the internal actuator domains that can vary across all measurable subsets, provided they possess a prescribed measure at any given decision-making moment, within certain open bounded domains that are situated away from the degeneracy point. It is important to note that this work diverges from \cite{xu2023existence}. On one hand, the result (Theorem \ref{main-theorem}, to be presented later) can be obtained within the case of degenerate parabolic equations. On the other hand, the equation's degeneracy introduces challenges when studying the properties of analyticity (see \eqref{key-inequality}, to be given later), thus complicating the analysis of shape optimization problems. To overcome this challenge, we employ relevant techniques from \cite{liu2023observability} to establish the requisite properties of analyticity.

The subsequent sections of this manuscript are organized as follows: Section 2 elucidates the principal issues under investigation and outlines the significant findings. In Section 3, essential supplementary findings essential to the analysis are provided. Section 4 delves into the exploration of four different formulations that are equivalent to the problem being studied. Lastly, Section 5 furnishes the rigorous proofs of the main theorem.
\section{Problems formulation and main results}
In this section, we will outline the principal issues investigated and present the pivotal findings of this manuscript. Firstly, let us introduce necessary notations.

Let $T>0$ be a fixed positive time constant, and $\O:=(0,1)$. Throughout this paper, we denote by $\langle\cdot,\cdot\rangle$ the scalar product in $L^2(\O)$ and denote by $\|\cdot\|$ the norm induced by $\langle\cdot,\cdot\rangle$. We denote by $|\cdot|$ the Lebesgue measure on $\R$.

Let $A$ be an unbounded linear operator on $L^2(\O)$:
\begin{equation}\label{operator}
\begin{cases}
\mathcal{D}(A):=\{v\in H_\alpha^1(\O):(x^{\alpha}v_x)_x\in L^2(\O) ~\text{and}~ BC_\alpha(v)=0 \},\\
Av :=  (x^{\alpha}v_x)_x,\ \forall v\in\mathcal{D}(A),\,\alpha\in(0,2),
\end{cases}
\end{equation}
where
$$
\begin{array}{ll}
H^1_\alpha(\O):=\bigg\{v\in L^2(\O):v \text{ is absolutely continuous in}~ \O,
 x^{\frac{\alpha}{2}}v_x\in L^2(\O)\,\text{and}~\,v(1)=0 \bigg\},
\end{array}
$$
and
\begin{equation*}
BC_\alpha(v)=
\begin{cases}
v_{|_{x=0}}, &\alpha\in(0,1),\\
(x^{\alpha}v_x)_{|_{x=0}}, &\alpha\in[1,2),
\end{cases}
\end{equation*}
endowed with the norms
$$
\|v\|^2_{H^1_\alpha(\O)}:=\|v\|^2+\|\sqrt{x^\alpha}v_x\|^2.
$$
By \cite{cannarsa2008carleman}, $A$ is the infinitesimal generator of a strongly continuous semigroup $\{e^{At}\}_{t\geq0}$ on $L^2(\O)$.

Let $\O_1\subset \O$ be defined as $\O_1:=(\epsilon,1)$ for some $\epsilon>0$ and let $0<\tau< T$. Let  $\lambda\in(0,1)$ be a given constant and denote by
 \begin{equation} \label{admissible-area}
{W}=\left\{\omega\subset \Omega_1\bigm| \omega \text{ is measurable
with }|\omega|=\lambda |\Omega_1|\right\}.
\end{equation}

The system, that we consider in this paper, is described by the following degenerate equation:
\begin{equation}
\label{state}
\left\{
\begin{array}{ll}
y_t(x,t)-Ay(x,t)+a(x)y(x,t) = \chi_{(\tau,T)}(t)\chi_\o(x) u(x,t), & \left( x ,t\right) \in \O\times(0,T),   \\[2mm]
y(1,t)=BC_\alpha\big(y(\cdot,t)\big) = 0, & t\in \left(0,T\right), \\[2mm]
y\left(x, 0\right) =y_{0}(x), &  x\in \O\,,
\end{array}
\right.
\end{equation}
where initial state $y_0 \in L^2(\O)$, $y$ is the state variable, the function $a(x)$ is analytic in $\O$, the control $u\in L^2(\O\times(\tau,T))$ and $\omega\in { W}$ is said to be, by abuse of notation, the
actuator location. By \cite{cannarsa2008carleman}, one can check that for all $y_0\in L^2(\O)$, system (\ref{state}) admit a unique solution $y$ in the space $C([0,T];L^2(\O))\cap L^2(0,T;H_\alpha^1(\O))$.

Let $y_d\in L^2(\Omega)$ represent the desired state and $\varepsilon_0>0$ denote an admissible error threshold. Let $\omega\in { W}$ and $y_0\in L^2(\Omega)$ be fixed. We address the following minimal norm control problem, aiming to guide the initial state $y_0(\cdot)$ towards a $\varepsilon_0$-neighborhood of the target state $y_d$:
%
\begin{equation}\label{NP-xi-omega}
 N(y_0,\omega):=\inf\big\{\|u\|_{L^2(\Omega\times(\tau,T))}\,\big|\,\|y(T;y_0,
\omega;u)-y_d\|\leq \varepsilon_0\big\},
\end{equation}
and consider the following problem:
\begin{equation}\label{NP-omega}
N(\omega):=\sup\limits_{\|y_0\|\leq 1}N(y_0,\omega).
%
\end{equation}
Our objective is to determine the optimal location for an actuator that minimizes the aforementioned quantity $N(\omega)$. To accomplish this, we shall address the following problem:
$$
{\bf (OP)}\quad\ \overline N:= \inf\limits_{\omega\in{ W}}N(\omega)=\inf_{\omega\in W}\sup\limits_{\|y_0\|\leq 1}N(y_0,\omega).
$$
In order to solve problem {\bf (OP)}, we need the following assumption {\bf(H)}:
$$
({\bf H})\quad\ \|y(T;y_0,\o
;0)-y_d\|>\varepsilon_0.
$$

Several remarks  are given in order as follows.
\begin{remark}
We direct our attention to region $\O_1$ rather than region $\O$ in (\ref{admissible-area}) due to the degeneracy of the operator $A$ defined in (\ref{operator}) when $x=0$. In this scenario, the desired analytic properties (\ref{key-inequality}) (to be formulated later) cannot be derived. Moreover, if we consider a control region $\o$, including the singular point $0$, it would prevent us from obtaining a solution to problem {\bf (OP)}, as it would lack the necessary analytic properties. Hence, our focus lies on region $\O_1$ that are distant from the singular point $0$.
\end{remark}
\begin{remark}
Regarding the optimization problem $N(\omega)$ (see \eqref{NP-omega}), we analyze the worst-case scenario concerning the initial condition $y_0$. When considering the case of $y_d=0$, it follows from the linearity of \eqref{state} that for any $\mu>0$,
 \begin{equation}\label{21-01}
 	\begin{split}
 &\inf\big\{\|u\|_{L^2(\Omega\times\tau,T))}\,\big|\,\|y(T;\mu y_0,
\omega;u)\|\leq \mu\varepsilon_0\big\}=\mu\inf\big\{\|u\|_{L^2(\Omega\times(\tau,T))}\,\big|\,\|y(T;y_0, \omega;u)\|\leq \varepsilon_0\big\}
\end{split}
\end{equation}
From
which, it is derived  that
$$
\sup\limits_{y_0\in L^2(\Omega)}N(y_0,\omega)=+\infty.
$$
Thus we need impose a constraint on $y_0$.  Assume that
$$
B(\bar{y}_0,r)=\left\{ y_0\in L^2(\O)\,\big|\,\|y_0-\bar{y}_0\|\leq r,\,\bar{y}_0\in L^2(\O),r>0 \}\right..
$$
It follows from  \eqref{21-01} that
 \begin{equation*}
 \begin{split}
 &\inf\big\{\|u\|_{L^2(\Omega\times(\tau,T))}\,\big|\,\|y(T;y_0,
\omega;u)-y_d\|\leq \varepsilon_0\big\}\\
=&r\inf\big\{\|u\|_{L^2(\Omega\times(\tau,T))}\,\big|\,\|y(T;\hat{y}_0,
\omega;u)-\hat y_d\|\leq \varepsilon_0/r\big\}
\end{split}
 \end{equation*}
with
$$
y_0=\bar{y}_0+r\hat{y}_0,\qquad \|\hat{y}_0\|=1\qquad \text{ and }\qquad\hat y_d=\frac{y_d-y(T;\bar{y}_0,\omega;0)}{r},
$$
where $\hat{y}_0\in L^2(\O)$. Therefore, we can limit ourselves with restriction of  $\|y_0\|\leq 1$ without loss of generality.
\end{remark}
\begin{remark}
To ensure the non-triviality of problem {\bf(OP)}, we impose the assumption {\bf(H)}, which guarantees the existence of at least one $y_0\in L^2(\Omega)$ with $|y_0|\leq 1$. When the assumption {\bf(H)} is not satisfied, it becomes evident that the null control is optimal for the problem $N(y_0,\omega)$ (see \eqref{NP-xi-omega}) for any $\omega\in W$. Consequently, in such instances, any $\omega\in W$ serves as a solution to the problem {\bf(OP)}.
\end{remark}
\begin{remark}
In our approach, we propose incorporating the controls primarily within the initial interval $[\tau,T]$ rather than spanning the entire duration of $[0,T]$. This selection may appear somewhat contrived within the given context, but it is necessitated by technical constraints. Specifically, the decision to focus on (\ref{key-inequality}) (to be formulated later) stems from the fact that its inclusion does not introduce any singularities during the process of establishing the analyticity of $G(\eta)$ given in (\ref{definition-of-G}).
\end{remark}
Now we present the main result of this paper.
\begin{theorem} \label{main-theorem}
Let $\lambda\in(0,1)$.
Given the analyticity of $a(\cdot)$ in $\Omega$ and the fulfillment of assumption {\bf(H)}, the problem {\bf(OP)} demonstrates the existence of at least one solution. Moreover, any solution to {\bf(OP)} is necessarily an upper level set of an analytic function.
\end{theorem}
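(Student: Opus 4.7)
The plan is to convexify the admissible class $W$ and then combine a minimax argument with analyticity of an adjoint-type function to recover a characteristic function solution.

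First I would introduce the relaxed admissible set
$$\overline{W} := \left\{\eta \in L^\infty(\Omega_1) : 0 \leq \eta \leq 1,\ \int_{\Omega_1}\eta(x)\,dx = \lambda|\Omega_1|\right\},$$
which is convex and weak-$*$ compact in $L^\infty(\Omega_1)$, and formulate a relaxed problem $(\mathbf{RP})$ by replacing $\chi_\omega$ with $\eta$ in \eqref{state}. Since $W$ is embedded in $\overline W$ as its set of extreme points, the strategy is: (i) produce a minimizer $\eta^*$ of $(\mathbf{RP})$; (ii) prove $\eta^*$ is itself a characteristic function; (iii) identify the underlying set $\omega^*$ as a solution of $(\mathbf{OP})$.

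Second, using the four equivalent formulations established in Section 4, I would reformulate $(\mathbf{RP})$ as a two-person zero-sum game between the actuator $\eta \in \overline W$ and the initial datum $y_0 \in \{\|y_0\| \leq 1\}$. After checking the appropriate convexity in $\eta$ (coming from the quadratic norm-optimal dual formula), concavity in $y_0$, and lower/upper semicontinuity in the respective weak topologies, a Sion-type minimax theorem yields a saddle point $(\eta^*, y_0^*)$ with
$$\overline N_{\mathrm{relaxed}} = \sup_{\|y_0\|\leq 1}\inf_{\eta \in \overline W} N(y_0,\eta) = \inf_{\eta \in \overline W}\sup_{\|y_0\|\leq 1} N(y_0,\eta).$$
Assumption $(\mathbf{H})$ ensures that the saddle value is strictly positive, so the inner problem $\inf_{\eta \in \overline W} N(y_0^*, \eta)$ is nondegenerate.

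Third, and most delicately, I would extract $\eta^*$ from the first-order optimality condition for the inner minimization. Writing $N(y_0^*,\eta)^2$ via duality in terms of the adjoint semigroup $\{e^{A^*t}\}_{t\geq 0}$, its Gateaux derivative with respect to $\eta$ at $\eta^*$ is integration against a function of the form
$$G^*(x) = \int_\tau^T \bigl|\bigl(e^{A^*(T-t)} z^*\bigr)(x)\bigr|^2\,dt, \qquad x \in \Omega_1,$$
where $z^*$ is the dual optimizer associated to $(\eta^*, y_0^*)$. The KKT condition for the linear functional $\eta \mapsto -\int_{\Omega_1} \eta G^*$ on $\overline W$ then forces
$$\eta^*(x) = \chi_{\{G^* > c\}}(x) + \theta(x)\chi_{\{G^* = c\}}(x)$$
for some Lagrange multiplier $c$ and some $\theta(x) \in [0,1]$, with $c$ fixed by the measure constraint. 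It remains to show $G^*$ is real-analytic on $\Omega_1$, for then $|\{G^* = c\}| = 0$ and $\eta^* = \chi_{\omega^*}$ with $\omega^* = \{G^* > c\} \in W$ the desired upper level set.

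The main obstacle will be proving this analyticity: the operator $A$ is degenerate at $x=0$, so the standard Gaussian heat-kernel argument fails. I would instead exploit (a) that $\Omega_1 = (\epsilon,1)$ stays uniformly away from the singular point, and (b) that controls act only on $(\tau, T)$, so the smoothing of $e^{A^*(T-t)}$ has a full time $T-\tau > 0$ to operate before one restricts to $\Omega_1$. Following the technique of \cite{liu2023observability}, I would expand $e^{A^*(T-t)}z^*$ in the Bessel-type eigenbasis of $A$, extend each eigenfunction analytically to a complex neighborhood of $\Omega_1$, and use exponential decay in the eigenvalues together with uniform complex bounds on the eigenfunctions away from $x=0$ to conclude that the series defining $G^*$ converges on a complex strip around $\Omega_1$. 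Once analyticity of $G^*$ is in hand, the characterization $\omega^* = \{G^* > c\}$ closes out both conclusions of Theorem~\ref{main-theorem}.
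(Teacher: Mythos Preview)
Your relaxation step and the bathtub/upper-level-set endgame are sound in spirit, but the minimax step contains a genuine gap. The map $y_0\mapsto N(y_0,\beta)$ is \emph{convex}, not concave: if $u_i$ is optimal for $y_0^{(i)}$ ($i=1,2$), then $\tfrac12(u_1+u_2)$ is admissible for $\tfrac12(y_0^{(1)}+y_0^{(2)})$ by linearity of \eqref{stateR} and convexity of $\|\cdot-y_d\|$, whence $N\bigl(\tfrac12(y_0^{(1)}+y_0^{(2)}),\beta\bigr)\le\tfrac12\bigl(N(y_0^{(1)},\beta)+N(y_0^{(2)},\beta)\bigr)$. Equivalently, in the dual formula $\tfrac12 N(y_0,\beta)^2=\sup_\eta\bigl[-\tfrac12\langle\beta,G(\eta)\rangle-\langle y_0,\varphi(0;\eta)\rangle+\langle y_d,\eta\rangle-\varepsilon_0\|\eta\|\bigr]$ you are taking a supremum of affine functions of $y_0$. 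Hence a Sion-type theorem does not apply to the pair $(\beta,y_0)$, and there is no reason for a saddle point $(\beta^*,y_0^*)$ to exist. Without it you only know that $\beta^*$ minimizes $\sup_{\|y_0\|\le1}N(y_0,\beta)$, not that $\beta^*$ minimizes $N(y_0^*,\beta)$ for some fixed $y_0^*$; the Danskin-type first-order condition then involves the entire (possibly non-singleton) set of maximizing $y_0$'s, and your single function $G^*(x)=\int_\tau^T|\varphi(x,t;z^*)|^2\,dt$ is not what emerges.

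This is exactly the obstruction the paper isolates in Remark~\ref{remark1-2}: after taking the sup over $y_0$ explicitly, the cost $J(\beta,\eta)$ in \eqref{cost-function} contains the non-convex term $-\|\varphi(0;\eta)\|$, so neither the game in $(\beta,y_0)$ nor the game in $(\beta,\eta)$ is convex--concave. The paper's remedy is a \emph{second} relaxation, replacing $\eta\in\hat{\cB}$ by positive normalized functionals $h\in\cH(\hat{\cB})\subset C_b(\hat{\cB})^*$ (problems {\bf(SP3)}--{\bf(SP4)}). The lifted cost $\widetilde J(\beta,h)=h\bigl(J(\beta,\cdot)\bigr)$ is affine in $\beta$ and linear in $h$, so a Nash equilibrium $(\beta^*,h^*)$ does exist (Proposition~\ref{Neumann}), and $\beta^*$ then maximizes the linear functional $\beta\mapsto\int_{\Omega_1}\beta(x)H_{h^*}(x)\,dx$ with $H_{h^*}(x)=h^*\bigl((G(\cdot))(x)\bigr)$. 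The analytic function whose upper level set is $\omega^*$ is therefore this $h^*$-``average'' $H_{h^*}$, not a single $G(\eta^*)$; its analyticity (Lemma~\ref{lemma10}) is obtained from the uniform derivative bounds \eqref{key-inequality} of Lemma~\ref{lemma6}, proved via the pointwise estimates of Lemma~\ref{lemma-analytic} rather than through an eigenfunction expansion.
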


\section{Auxiliary conclusions}
In this section, we present several supplementary findings that will be utilized in subsequent analyses. Initially, recognizing the absence of compactness in ${W}$, it is imperative to extend the feasible set $W$ to a more relaxed set (see \cite{privat2013optimal,privat2015complexity,privat2015optimal}):
\begin{equation}\label{relax-location}
 \cB:=\Bigr\{\beta\in L^\infty(\Omega_1;[0,1])\Bigm| \displaystyle\int_{\Omega_1} \beta(x)dx=\lambda |\Omega_1|\Bigl\}.
\end{equation}
Note that the set $\cB$ is a relaxation of the set $\{\chi_\o\,\big|\,\o\in W\}$.

In accordance with the formulation,   let us discuss the following controlled system:
\begin{equation} \label{stateR}
 \left\{\begin{array}{ll}
  y_t(x, t)-A y(x,t)+a(x)y(x,t)=\chi_{(\tau,T)}(t)\sqrt{\beta(x)} u(x,t)~~&{\mbox{in }}\Omega\times(0,T),\\
  y(1,t)=BC_\alpha\big(y(\cdot,t)\big) = 0 &t\in (0,T),\\
  y(x,0)=y_0(x)&{\mbox{in }}\Omega,
  \end{array}\right.
\end{equation}
where $\beta\in \cB$ and   control $u\in L^2(\Omega\times(\tau,T))$.
The  solution of (\ref{stateR}) is denoted   by $y(\cdot;y_0,\beta; u)$.
This notation is compatible with $y(\cdot;y_0,\omega; u)$ when $\beta=\chi_\omega$.

Let us introduce the following adjoint equation corresponding equation (\ref{state}) (or (\ref{stateR}))
\begin{equation} \label{adjoint-state}
 \left\{\begin{array}{ll}
  \varphi_t(x, t)+A \varphi(x,t)-a(x)\varphi(x,t)=0, &{\mbox{in }}\Omega\times(0,T),\\
  \varphi(1,t)=BC_\alpha\big(\varphi(\cdot,t)\big) = 0, &t\in (0,T),\\
  \varphi(x,T)=\eta,&{\mbox{in }}\Omega,
  \end{array}\right.
\end{equation}
where the solution $\varphi(\cdot;\eta)$ satisfying the following observability inequality (see \cite{liu2023observability}):
\begin{equation}\label{observability inequality}
\|\varphi(0;\eta)\|\leq\displaystyle C\left[\int^{T}_\tau\int_\Omega\chi_\omega
(x)|\varphi(t;\eta)|^2dxdt\right]^{1/2}, \qquad \forall\, \eta\in L^2(\Omega),
\end{equation}
here $C=C(T,\alpha,\o,\O,\mu)$, $\alpha\in(0,2)$ and $\mu\in(0,1)$ is defined as follows
\begin{equation*}\label{u}
\mu=
\left\{
\begin{array}{lll}
\dfrac{3}{4}, &~\text{if}~ \alpha\in(0,2)\setminus\{1\},\\[3mm]
\dfrac{3}{2\gamma}  ~\text{for any}~\gamma\in(0,2),&~\text{if}~ \alpha=1.
\end{array}
\right.
\end{equation*}

\begin{remark}
The presence of $\sqrt{\beta}$ in \eqref{relax-location}, rather than $\beta,$ may initially seem unusual. However, the convexification of the set $W$ defined by \eqref{admissible-area} in the weak star topology of $L^\infty$ precisely corresponds to the set $\mathcal{B}$ defined by \eqref{relax-location}. It is worth noting that $\sqrt{\beta(x)}$ appears in the control operator $B$ of the abstract evolution equation $\dot y(t) = Ay(t) + Bu(t)$ and in the corresponding observability inequality. The square of the term $\|B^*\varphi(\cdot)\|_{L^2(0,T; L^2(\Omega))}$ can be expressed as:
$$
\int^T_0\|B^*\varphi(t)\|^2 dt=\int^T_0\int_\Omega\beta(x)|\varphi(x,t)|^2 dx dt=\int^T_0\int_\Omega\left|\sqrt{\beta(x)}\varphi(x,t)\right|^2 dx dt.
$$
Since this expression is linear with respect to $\beta$, we choose to adopt $\sqrt{\beta}$ in the above formulation.
\end{remark}

Consequently, the problem $N(y_0,\omega)$ (see \eqref{NP-xi-omega}) and {\bf (OP)} can be reformulated as the following relaxation problems, respectively:
\begin{equation}\label{RNP-xi-omega}
 N(y_0,\beta):=\inf\big\{\|u\|_{L^2(\Omega\times(\tau,T))}\,\big|\,\|y(T;y_0,\beta
;u)-y_d\|\leq \varepsilon_0\big\},
\end{equation}
and
$$
{\bf (ROP)}\quad\ N^*:=
\inf\limits_{\beta\in\cB} N(\beta):=\inf\limits_{\beta\in\cB}\sup\limits_{\|y_0\|\leq 1} N(y_0,\beta).
$$

The notations $N(y_0,\beta)$ and $N(\beta)$ correspond to $N(y_0,\omega)$ and $N(\omega)$, respectively, when $\sqrt{\beta}=\chi_\omega$.
The relaxation process ensures the existence of a solution. However, it introduces a limitation: we are uncertain whether there exists a gap between $\overline N$ and $N^*$. If $\overline N\neq N^*$, solving problem {\bf (ROP)} does not guarantee the solution of problem {\bf (OP)}. Fortunately, in our framework, it can be demonstrated that there is no gap, as we will show later, and problem {\bf (OP)} is solvable.

Let us introduce a map $G: L^2(\Omega)\rightarrow L^2(\Omega)$ by
\begin{equation}\label{definition-of-G}
G(\eta)=
\int^{T}_\tau|\varphi(t;\eta)|^2dt,\quad \text{for any }\eta\in L^2(\Omega),
\end{equation}
where $\varphi(t;\eta)$ is the solution to the system \eqref{adjoint-state} with terminal data $\eta\in L^2(\Omega)$. 

Now we give an observability inequality for adjoint equation (\ref{adjoint-state}) corresponding equation (\ref{stateR}). 
\begin{lemma}\label{lemma2}
Let $\lambda\in (0,1)$. Then the adjoint equation (\ref{adjoint-state}) is exactly observable, i.e., there exists a constant $C_\lambda>0$, independent of $\beta$, but possibly depending on $\lambda$ such that for all $\eta\in L^2(\O)$ and $\beta\in \cB$,
\begin{equation}\label{inequality}
 \hspace{30pt} \|\varphi(0;\eta)\|^2\leq C_{\lambda}\bigr\langle \beta, \,G(\eta)\bigl\rangle,
\end{equation}
where $\varphi$ is the solution to \eqref{adjoint-state}.
\end{lemma}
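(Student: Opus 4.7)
The plan is to reduce the relaxed observability inequality \eqref{inequality} to the classical observability inequality \eqref{observability inequality} by passing from the density $\beta$ to a suitable super-level set of $\beta$, and then invoking \eqref{observability inequality} with a constant that does not depend on the particular set chosen.

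First, given $\beta\in\cB$, I would introduce the super-level set
$$\omega_\beta:=\{x\in\Omega_1:\beta(x)\geq \lambda/2\}.$$
Using $0\leq\beta\leq 1$ and $\int_{\Omega_1}\beta\,dx=\lambda|\Omega_1|$, a direct splitting gives
$$\lambda|\Omega_1|=\int_{\omega_\beta}\beta\,dx+\int_{\Omega_1\setminus\omega_\beta}\beta\,dx\leq |\omega_\beta|+\tfrac{\lambda}{2}(|\Omega_1|-|\omega_\beta|),$$
whence $|\omega_\beta|\geq \tfrac{\lambda}{2-\lambda}|\Omega_1|=:m_\lambda>0$, a lower bound depending only on $\lambda$ and $|\Omega_1|$. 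Since $\beta\geq\tfrac{\lambda}{2}\chi_{\omega_\beta}$ pointwise and $G(\eta)\geq 0$, one obtains the pointwise majorization
$$\langle \beta,\,G(\eta)\rangle\geq \frac{\lambda}{2}\int_{\omega_\beta}G(\eta)(x)\,dx=\frac{\lambda}{2}\int_\tau^T\!\!\int_{\omega_\beta}|\varphi(t;\eta)(x)|^2\,dx\,dt.$$

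Second, I would apply \eqref{observability inequality} to the set $\omega_\beta$. The crucial point is that the resulting constant must be independent of $\beta$. To achieve this, I would invoke the uniform version of \eqref{observability inequality} in which the observability constant depends on the observation set only through a positive lower bound on its Lebesgue measure; this is the form in which the inequality is actually proved in \cite{liu2023observability}, via the analyticity of $\varphi(t;\cdot)$ on compact subsets of $\Omega_1$ (i.e., away from the degeneracy $x=0$) combined with a Lebeau–Robbiano style spectral inequality. Applying this uniform observability to $\omega=\omega_\beta$, whose measure is at least $m_\lambda$, produces a constant $\widetilde C_\lambda$ such that
$$\|\varphi(0;\eta)\|^2\leq \widetilde C_\lambda\int_\tau^T\!\!\int_{\omega_\beta}|\varphi(t;\eta)|^2\,dx\,dt.$$
Combining with the first step gives \eqref{inequality} with $C_\lambda=\tfrac{2\widetilde C_\lambda}{\lambda}$.

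The main obstacle, and the only nontrivial ingredient, is the \emph{uniformity} of the observability constant across the family of super-level sets $\{\omega_\beta:\beta\in\cB\}$. Without such uniformity the constant in \eqref{inequality} would a priori depend on $\beta$, defeating the purpose of the lemma. In the non-degenerate parabolic setting this uniformity is classical and follows from the Apraiz–Escauriaza–Fernández–Zhang type spectral inequality, whose constant depends on the measurable observation set only through a lower bound on its measure. Here the degeneracy of $A$ at $x=0$ prevents a direct application of that machinery, and the role of the restriction to $\Omega_1=(\epsilon,1)$ in \eqref{admissible-area} is precisely to reinstate the analyticity of solutions on the observation region; this is what makes the uniform version of \eqref{observability inequality} available from \cite{liu2023observability}, and this is the step where the hypotheses of the lemma (notably $\omega_\beta\subset\Omega_1$ and the bound $\alpha\in(0,2)$) are used.
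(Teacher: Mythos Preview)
Your proposal is correct and follows essentially the same approach as the paper: pass to a super-level set of $\beta$ whose measure is bounded below by a quantity depending only on $\lambda$, then apply \eqref{observability inequality} on that set and absorb the threshold into the constant. The paper uses the cutoff $\sqrt{\lambda/2}$ rather than your $\lambda/2$, and it leaves implicit the uniformity of the observability constant in $|\omega|$ that you take care to spell out.
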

\begin{proof}

For any $\beta\in \cB$, let
\[
\gamma =\dfrac{|\{\beta\ge \sqrt{\lambda/2}\}|}{|\O_1|}.
\]
Since
\begin{align*}
  \lambda|\O_1|
& = \int_{\O_1} \beta dx = \int_{\{\beta\ge \sqrt{\lambda/2}\}} \beta dx +
\int_{\{\beta<\sqrt{\lambda/2}\}} \beta dx\\
& \leq |\{\beta\ge \sqrt{\lambda/2}\}| + \sqrt{{\lambda}/{2}}\cdot |\{\beta< \sqrt{\lambda/2}\}|,
\end{align*}
from above inequality, we have
\[
\lambda|\O_1|\leq\gamma|\O_1|  + \sqrt{{\lambda}/{2}}(1-\gamma)|\O_1|,
\]
and consequently,
\[
\gamma \geq \frac{2\lambda-\sqrt{2\lambda}}{2-\sqrt{2\lambda}}.
\]
Therefore, we obtain
\begin{equation*}
  \label{eq:7.6.2}
  |\{\beta\geq \sqrt{\lambda/2}\}| \ge \frac{2\lambda-\sqrt{2\lambda}}{2-\sqrt{2\lambda}}|\O_1|>0,
\end{equation*}
for all $\beta\in \cB$.

It then follows from inequality (\ref{observability inequality}) with $\o= \{\beta\geq
\sqrt{\lambda/2}\}$ that
\begin{align*}
   \|\varphi(0;\eta)\|^2
& \leq C  \int_\tau^T\int_\O\chi_{\{\beta\geq
\sqrt{\lambda/2}\}}|\varphi(t;\eta)|^2dxdt  
 \leq C  \int_\tau^T\int_\O\chi_{\{\beta\geq
\sqrt{\lambda/2}\}}\frac{\beta}{\sqrt{\lambda/2}}|\varphi(t;\eta)|^2dxdt\\
& = \frac{C}{\sqrt{\lambda/2}}  \int_\tau^T\int_\O\chi_{\{\beta\geq
\sqrt{\lambda/2}\}}\beta|\varphi(t;\eta)|^2dxdt \leq C_\lambda \int_\tau^T\int_\O\beta|\varphi(t;\eta)|^2dxdt\\
&=C_\lambda\left\langle \beta, \int_\tau^T|\varphi(t;\eta)|^2d t\right\rangle,
\end{align*}
which implies the desired estimate (\ref{inequality}).
\end{proof}
\begin{remark}
Thanks to the observability inequality (\ref{inequality}), we can make sure that the set on the right-hand side of (\ref{RNP-xi-omega}) is not empty from Lemma \ref{Le1}, which will be given later.
\end{remark}

In general, it is not easy (or impossible)
to solve the problem $ N(y_0,\beta)$ (see (\ref{RNP-xi-omega})) directly; see \cite{guo2015optimal} for a special class of subdomains.
Instead, let us introduce a functional
\begin{equation*}
  \label{J-1}
  J_{\varepsilon_0}(\eta,\beta)=\frac{1}{2}\int_\tau^T\|\sqrt{\beta}\varphi(t;\eta)\|^2dt+\left\langle y_0,\varphi(0;\eta) \right\rangle-\left\langle y_d,\eta \right\rangle+\varepsilon_0\|\eta\|,
\end{equation*}
and propose the following variational problem
\begin{equation*}
V_{\varepsilon_0}(\beta):=\inf_{\eta\in L^2(\O)}J_{\varepsilon_0}(\eta,\beta).
\end{equation*}
\begin{lemma}\label{Le1}  Let $y_0\in L^2(\Omega)\setminus\{0\}$ and $\beta\in\cB$ be fixed.
Then, we have the following conclusions:
\begin{itemize}
  \item [(1)] There exists $\eta^*\in L^2(\Omega)$ such that $J_{\varepsilon_0}(\cdot,\beta)$ attains its minimum value at $\eta^*$;
  \item [(2)] The control defined by
  \begin{equation*}  \label{Le1-eq1}
u^*=\chi_{(\tau,T)}\sqrt{\beta}\varphi(\cdot;\eta^*),
\end{equation*}
 is the minimal norm optimal control to the problem $ N(y_0,\beta)$ (see (\ref{RNP-xi-omega})), where  $\varphi(\cdot;\eta^*)$ is a solution of adjoint equation (\ref{adjoint-state}) with terminal value $\varphi(T)=\eta^*$. Moreover, we have
   \begin{equation}  \label{dual-relationship}
V_{\varepsilon_0}(\beta)=-\displaystyle\frac{1}{2}N(y_0,\beta)^2.
\end{equation}
\end{itemize}
\end{lemma}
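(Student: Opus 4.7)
The plan is a two-step argument: first a direct-method existence proof for the dual functional $J_{\varepsilon_0}(\cdot,\beta)$, and then the classical Fenchel-type duality that identifies $u^{*}=\chi_{(\tau,T)}\sqrt{\beta}\varphi(\cdot;\eta^{*})$ as the minimum-norm control and produces the identity \eqref{dual-relationship}. The reversed sign $-\tfrac{1}{2}N(y_0,\beta)^2$ is the standard signature of this duality.

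For (1), I would first observe that $\eta\mapsto J_{\varepsilon_0}(\eta,\beta)$ is the sum of a nonnegative continuous quadratic form, two continuous linear functionals, and the convex continuous norm, so it is convex and continuous on $L^2(\Omega)$. The only nontrivial ingredient is coercivity. To establish it, I would take $(\eta_n)$ with $\|\eta_n\|\to\infty$, set $\tilde\eta_n:=\eta_n/\|\eta_n\|$, and analyse
\begin{equation*}
  \frac{J_{\varepsilon_0}(\eta_n,\beta)}{\|\eta_n\|}=\frac{\|\eta_n\|}{2}\int_\tau^{T}\|\sqrt{\beta}\varphi(t;\tilde\eta_n)\|^2\,dt+\langle y_0,\varphi(0;\tilde\eta_n)\rangle-\langle y_d,\tilde\eta_n\rangle+\varepsilon_0.
\end{equation*}
If $\liminf_n\int_\tau^{T}\|\sqrt{\beta}\varphi(t;\tilde\eta_n)\|^2 dt>0$ the first term blows up; otherwise, extracting a subsequence with $\tilde\eta_n\rightharpoonup\tilde\eta$ weakly in $L^2(\Omega)$ and invoking weak lower semicontinuity of the $L^2$-norm along the bounded linear map $\eta\mapsto\sqrt{\beta}\varphi(\cdot;\eta)$ forces $\sqrt{\beta}\varphi(\cdot;\tilde\eta)=0$, and Lemma \ref{lemma2} then yields $\varphi(0;\tilde\eta)=0$. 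Backward uniqueness for the degenerate adjoint \eqref{adjoint-state} (applied after the time reversal $s=T-t$) gives $\tilde\eta=0$, so the linear terms vanish in the limit and $\liminf_n J_{\varepsilon_0}(\eta_n,\beta)/\|\eta_n\|\geq\varepsilon_0>0$. The direct method then delivers a minimizer $\eta^{*}$.

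For (2), assuming $\eta^{*}\neq 0$ (otherwise the null control is admissible, $N(y_0,\beta)=0$, and \eqref{dual-relationship} is trivial), I would write the Euler--Lagrange condition
\begin{equation*}
  \int_\tau^{T}\langle\beta\varphi(t;\eta^{*}),\varphi(t;\eta)\rangle\,dt+\langle y_0,\varphi(0;\eta)\rangle-\langle y_d,\eta\rangle+\varepsilon_0\frac{\langle\eta^{*},\eta\rangle}{\|\eta^{*}\|}=0,\qquad\forall\,\eta\in L^2(\Omega),
\end{equation*}
and combine it with the multiplier identity $\langle y(T;y_0,\beta;u^{*}),\eta\rangle=\langle y_0,\varphi(0;\eta)\rangle+\int_\tau^{T}\langle\beta\varphi(t;\eta^{*}),\varphi(t;\eta)\rangle\,dt$, obtained by pairing \eqref{stateR} with the solution $\varphi$ of \eqref{adjoint-state} and integrating by parts. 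This immediately gives $y(T;y_0,\beta;u^{*})-y_d=-\varepsilon_0\eta^{*}/\|\eta^{*}\|$, so $u^{*}$ is admissible in \eqref{RNP-xi-omega} and saturates the tolerance. For any other admissible $u$, substituting the same multiplier identity into $V_{\varepsilon_0}(\beta)=J_{\varepsilon_0}(\eta^{*},\beta)$, bounding $|\langle y(T;y_0,\beta;u)-y_d,\eta^{*}\rangle|\leq\varepsilon_0\|\eta^{*}\|$ by Cauchy--Schwarz, and applying $ab\leq\tfrac{1}{2}a^2+\tfrac{1}{2}b^2$ to the cross term yields $V_{\varepsilon_0}(\beta)\geq-\tfrac{1}{2}\|u\|^2$, whence $V_{\varepsilon_0}(\beta)\geq-\tfrac{1}{2}N(y_0,\beta)^2$. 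Choosing $u=u^{*}$ saturates every inequality and simultaneously proves \eqref{dual-relationship} and the optimality of $u^{*}$.

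The delicate point I anticipate is the backward-uniqueness step inside the coercivity argument: Lemma \ref{lemma2} only bounds $\|\varphi(0;\cdot)\|$ by a weighted $L^2$-integral on $(\tau,T)$, so upgrading $\varphi(0;\tilde\eta)=0$ to $\tilde\eta=0$ requires backward uniqueness for the degenerate adjoint, a property that is less classical than in the uniformly parabolic setting and must be imported from the Carleman-based uniqueness machinery underlying \cite{cannarsa2008carleman,liu2023observability}. Everything else is routine convex analysis and an integration by parts.
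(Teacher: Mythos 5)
Your part (1) follows essentially the paper's own argument: the same normalization $\bar\eta_n=\eta_n/\|\eta_n\|$, the same dichotomy on $\liminf_n\int_\tau^T\|\sqrt{\beta}\varphi(t;\bar\eta_n)\|^2dt$, the same use of Lemma \ref{lemma2}, and the same final implication $\varphi(0;\bar\eta)=0\Rightarrow\bar\eta=0$, which the paper simply asserts and you rightly flag as a backward-uniqueness property of the degenerate adjoint; it does hold (for instance because $A-a$, with $A$ as in \eqref{operator}, is self-adjoint, so the associated semigroup is injective), so this is a point of honesty rather than a gap, and it is shared with the paper.

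Part (2) is where you genuinely diverge, and your organization is arguably cleaner. The paper argues in four steps: admissibility of $u^*$ via one-sided difference quotients; a separate argument that $V_{\varepsilon_0}(\beta)<0$, whose only purpose is to guarantee $\eta^*\neq0$ so that the Euler--Lagrange equation with the term $\varepsilon_0\langle\eta^*,\eta\rangle/\|\eta^*\|$ is legitimate; a comparison with an arbitrary admissible control through the duality identity \eqref{Le1-eq5} plus Cauchy--Schwarz; and finally an algebraic computation of $V_{\varepsilon_0}(\beta)$. You instead split on $\eta^*=0$ versus $\eta^*\neq0$, extract from the Euler--Lagrange equation and \eqref{Le1-eq5} the exact relation $y(T;y_0,\beta;u^*)-y_d=-\varepsilon_0\eta^*/\|\eta^*\|$, and then run a weak-duality estimate $V_{\varepsilon_0}(\beta)\geq-\tfrac12\|u\|^2$ for every admissible $u$ via Young's inequality, saturated at $u=u^*$, so optimality of $u^*$ and \eqref{dual-relationship} come out simultaneously. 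What this buys you: you never need the paper's Step 2, whose construction of $\eta_\varepsilon$ with prescribed small norm yet $\|\varphi(0;\eta_\varepsilon)\|=1$ is delicate (the map $\eta\mapsto\varphi(0;\eta)$ is bounded), and whose conclusion $V_{\varepsilon_0}(\beta)<0$ in fact fails whenever the null control is already admissible --- exactly the situation your $\eta^*=0$ branch absorbs. What the paper's order buys is that, once $V_{\varepsilon_0}(\beta)<0$ is granted, $\eta^*\neq0$ is automatic and no case distinction is needed. Two small fills remain in your version, neither a real gap: in the $\eta^*=0$ branch, ``the null control is admissible'' should be justified by the one-sided variational inequality at $0$ (the paper's Step 1 computation with $u^*=0$), and the coercivity Case (ii) should be phrased along subsequences, a looseness you share with the paper.
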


\begin{proof}
At first, we prove the conclusion ($1$). 

One can easily verify that $J_{\varepsilon_0}(\cdot,\beta)$ is continuous and convex. Thus we only need to show the coercivity.

Let $\{\eta_n\}_{n\geq1}\subset L^2(\O)$ such that $\|\eta_n\|\rightarrow\infty$ as $n\rightarrow\infty$, and let
$$
\bar{\eta}_n=\frac{\eta_n}{\|\eta_n\|}
$$
so that $\|\bar{\eta}_n\|=1$. Then
$$
\frac{J_{\varepsilon_0}(\eta_n,\beta)}{\|\eta_n\|}=\frac{1}{2}\|\eta_n\|\int_\tau^T\|\sqrt{\beta}\varphi(t;\bar{\eta}_n)\|^2dt+\left\langle y_0,\varphi(0;\bar{\eta}_n) \right\rangle-\left\langle y_d,\bar{\eta}_n \right\rangle+\varepsilon_0.
$$
Note that
\begin{align*}
|\left\langle y_d,\bar{\eta}_n \right\rangle|\leq\|y_d\|,
\end{align*}
and
\begin{align*}
    |\left\langle y_0,\varphi(0;\bar{\eta}_n) \right\rangle|\leq\|y_0\|\|\varphi(0;\bar{\eta}_n)\|\leq C\|y_0\|\left[\int^{T}_\tau\|\sqrt{\beta}\varphi(t;\bar\eta_n)\|^2dt\right]^{1/2}.
\end{align*}

Case ($i$). If $\liminf\limits_{n\rightarrow\infty}\int_\tau^T\|\sqrt{\beta}\varphi(t;\bar{\eta}_n)\|^2dt>0$, we have
 \begin{equation}  \label{Le1-eq2}
J_{\varepsilon_0}(\eta_n,\beta)\rightarrow\infty ~\text{as}~n\rightarrow\infty,
\end{equation}
which implies the coercivity of $J_{\varepsilon_0}(\cdot,\beta)$.

Case ($ii$). If $\liminf\limits_{n\rightarrow\infty}\int_\tau^T\|\sqrt{\beta}\varphi(t;\bar{\eta}_n)\|^2dt=0$, then it follows from the observability inequality (\ref{inequality}) that $\bar{\eta}_n$ is bounded in $L^2(\O)$. Thus, we can extract a subsequence $\{\bar{\eta}_{n_j}\}\subset\{\bar{\eta}_{n}\}$ such that $\bar{\eta}_{n_j}\rightarrow \bar{\eta}$ weakly in $L^2(\O)$ and $\varphi(\cdot;\bar{\eta}_{n_j})\rightarrow\varphi(\cdot;\bar{\eta})$ weakly in $L^2(0,T;H_\alpha^1(\O))$. Moreover, by lower semi-continuity we obtain
$$
\int_\tau^T\|\sqrt{\beta}\varphi(t;\bar{\eta})\|^2dt\leq\liminf\limits_{n\rightarrow\infty}\int_\tau^T\|\sqrt{\beta}\varphi(t;\bar{\eta}_{n_j})\|^2dt.
$$
Then by the observability inequality (\ref{inequality}) again, we get $\varphi(0;\bar{\eta})=0$, and thus $\bar\eta=0$, and 
\begin{align*}
\liminf\limits_{n\rightarrow\infty}\frac{J_{\varepsilon_0}(\eta_n,\beta)}{\|\eta_n\|}
&\geq\liminf\limits_{n\rightarrow\infty}\big[ \left\langle y_0,\varphi(0;\bar{\eta}_n) \right\rangle-\left\langle y_d,\bar{\eta}_n \right\rangle+\varepsilon_0 \big]\\
&=\left\langle y_0,\varphi(0;\bar{\eta}) \right\rangle-\left\langle y_d,\bar{\eta} \right\rangle+\varepsilon_0=\varepsilon_0,
\end{align*}
which implies (\ref{Le1-eq2}), and so $J_{\varepsilon_0}(\cdot,\beta)$ is coercive.

To sum up, we showed that $J_{\varepsilon_0}(\cdot,\beta)$ is convex, continuous, and coercive, and thus the minimizer of
$J_{\varepsilon_0}(\cdot,\beta)$ exists. The conclusion (1) is true.

Next, we shall prove the conclusion (2) and the rest of the proof will be carried out by the following steps.

{\it Step 1. Show that $u^*=\chi_{(\tau,T)}\sqrt{\beta}\varphi(\cdot;\eta^*)$ is an admissible control for problem $N(y_0,\beta)$.} 

Since $J_{\varepsilon_0}(\cdot,\beta)$ attains its minimum value at $\eta^*$, for any $\eta\in L^2(\O)$ and $h\in\R$, we have
$$
J_{\varepsilon_0}(\eta^*,\beta)\leq J_{\varepsilon_0}(\eta^*+h\eta,\beta).
$$
Thus,
\begin{equation}\label{Le1-eq}
\begin{split}
&J_{\varepsilon_0}(\eta^*+h\eta,\beta)-J_{\varepsilon_0}(\eta^*,\beta)\\
=&\frac{1}{2}\int_\tau^T\|\sqrt{\beta}\varphi(t;\eta^*+h\eta)\|^2dt+\left\langle y_0,\varphi(0;\eta^*+h\eta) \right\rangle-\left\langle y_d,\eta^*+h\eta \right\rangle+\varepsilon_0\|\eta^*+h\eta\|\\
&-\frac{1}{2}\int_\tau^T\|\sqrt{\beta}\varphi(t;\eta^*)\|^2dt+\left\langle y_0,\varphi(0;\eta^*) \right\rangle-\left\langle y_d,\eta^* \right\rangle+\varepsilon_0\|\eta^*\|\\
=&\frac{h^2}{2}\int_\tau^T\|\sqrt{\beta}\varphi(t;\eta)\|^2dt+h\int_\tau^T\left\langle\sqrt{\beta}\varphi(t;\eta^*),\sqrt{\beta}\varphi(t;\eta)
\right\rangle dt+h\left\langle y_0,\varphi(0;\eta) \right\rangle\\
&-h\left\langle y_d,\eta \right\rangle+\varepsilon_0\left( \|\eta^*+h\eta\|-\|\eta^*\| \right)\\
\geq&0.
\end{split}
\end{equation}
Also note that
$$
\big|\|\eta^*+h\eta\|-\|\eta^*\|\big|\leq |h|\|\eta\|,
$$
then we obtain
\begin{equation}\label{Le1-eq3}
\begin{split}
&\frac{h^2}{2}\int_\tau^T\|\sqrt{\beta}\varphi(t;\eta)\|^2dt+h\int_\tau^T\left\langle\sqrt{\beta}\varphi(t;\eta^*),\sqrt{\beta}\varphi(t;\eta)
\right\rangle dt-h\left\langle y_d,\eta \right\rangle\\
+&\varepsilon_0|h|\|\eta\|
+h\left\langle y_0,\varphi(0;\eta) \right\rangle\geq0.
\end{split}
\end{equation}
If $h>0$, then dividing $h$ and sending $h \rightarrow 0^+$ in (\ref{Le1-eq3}) yield
$$
\int_\tau^T\left\langle\sqrt{\beta}\varphi(t;\eta^*),\sqrt{\beta}\varphi(t;\eta)
\right\rangle dt-\left\langle y_d,\eta \right\rangle
+\varepsilon_0\|\eta\|+\left\langle y_0,\varphi(0;\eta) \right\rangle\geq0.
$$
If $h<0$, then dividing $h$ and sending $h \rightarrow 0^-$ in (\ref{Le1-eq3}) yield
$$
\int_\tau^T\left\langle\sqrt{\beta}\varphi(t;\eta^*),\sqrt{\beta}\varphi(t;\eta)
\right\rangle dt-\left\langle y_d,\eta \right\rangle
-\varepsilon_0\|\eta\|+\left\langle y_0,\varphi(0;\eta) \right\rangle\leq0.
$$
Consequently,
\begin{equation}\label{Le1-eq4}
\left|\int_\tau^T\left\langle\sqrt{\beta}\varphi(t;\eta^*),\sqrt{\beta}\varphi(t;\eta)
\right\rangle dt-\left\langle y_d,\eta \right\rangle
+\left\langle y_0,\varphi(0;\eta) \right\rangle\right|\leq\varepsilon_0\|\eta\|.
\end{equation}
Multiplying $\varphi(t;\eta)$ on both sides of the first equation of (\ref{stateR}) and integrating with respect to  $x\in\O$ and $t\in(0,T)$, it produces
\begin{equation}\label{Le1-eq5}
\int_0^T\left\langle u,\sqrt{\beta}\varphi(t;\eta)
\right\rangle dt+\left\langle y_0,\varphi(0;\eta) \right\rangle=\left\langle y(T;y_0,\beta;u),\eta \right\rangle.
\end{equation}
Plugging (\ref{Le1-eq5}) into (\ref{Le1-eq4}) and letting $u=u^*=\chi_{(\tau,T)}\sqrt{\beta}\varphi(\cdot;\eta^*)$, we obtain
$$
\left\langle y(T;y_0,\beta;u^*)-y_d,\eta \right\rangle\leq\varepsilon_0\|\eta\|.
$$
Since $\eta$ is an arbitrary element in $L^2(\O)$, we conclude that
$$
\|y(T;y_0,\beta;u^*)-y_d\|\leq\varepsilon_0.
$$
Therefore, $u^*=\chi_{(\tau,T)}\sqrt{\beta}\varphi(\cdot;\eta^*)$ is an admissible control.

{\it Step 2. Show that $V_{\varepsilon_0}(\beta)<0$.}
  	
   Indeed, since $y_0\neq 0$ and
		\begin{equation*}
		\overline{ \{\varphi(0; \eta)\bigm| \eta\in L^2(\Omega)\}}=L^2(\Omega),
		\end{equation*}
		for every  $\varepsilon\in (0,\frac{\|y_0\|}{2})$  there exists $\eta_\varepsilon\in L^2(\Omega)$ such that
		\begin{equation*}
		\|\eta_\varepsilon\|=\frac{\varepsilon}{2(\|y_d\|+\varepsilon_0)},\,\|\varphi(0; \eta_\varepsilon)\|=1 \mbox{ and } \langle y_0, \varphi(0; \eta_\varepsilon)\rangle<-\|y_0\|+\frac{\varepsilon}{2}. 	
		\end{equation*}
		Let $\xi>0$. Then
		\begin{equation*}
		\begin{split}
		V_{\varepsilon_0}(\beta)
		\leq& \frac{1}{2}\int_\tau^T\|\sqrt{\beta}\varphi(t;\xi\eta_\varepsilon)\|^2dt+\left\langle y_0,\varphi(0;\xi\eta_\varepsilon) \right\rangle-\left\langle y_d,\xi\eta_\varepsilon \right\rangle+\varepsilon_0\|\xi\eta_\varepsilon\|\\
		=&  \frac{\xi^2}{2}\int_\tau^T\|\sqrt{\beta}\varphi(t;\eta_\varepsilon)\|^2dt+\xi\left[-\|y_0\|+\frac{\varepsilon}{2}+(\|y_d\|+\varepsilon_0)\|\eta_\varepsilon\|\right]\\
		=& \frac{\xi^2}{2} \int_\tau^T\|\sqrt{\beta}\varphi(t;\eta_\varepsilon)\|^2dt+\xi(-\|y_0\|+\varepsilon)\crr
		\leq& \frac{\xi^2}{2} \int_\tau^T\|\sqrt{\beta}\varphi(t;\eta_\varepsilon)\|^2dt-\frac{\xi}{2}\|y_0\|.
		\end{split}
		\end{equation*}
		Take $\xi>0$ small enough to obtain
		\begin{equation*}
		V_{\varepsilon_0}(\beta)<0.
		\end{equation*}

{\it Step 3. Show that $u^*=\chi_{(\tau,T)}\sqrt{\beta}\varphi(\cdot;\eta^*)$ is the minimum norm control for problem $N(y_0,\beta)$.}

To this end, let's go back to the inequality (\ref{Le1-eq}). Since $\varphi(\cdot;\eta^*)\neq 0$ by {\it Step 2.}, from the observability inequality (\ref{inequality}) we obtain \begin{equation*}\label{***}
		\varphi(\cdot;\eta^*)\neq 0 \mbox{ for a.e. } t\in [0,T].
		\end{equation*}
Then note that
$$
\lim_{h\rightarrow0}\frac{\|\eta^*+h\eta\|-\|\eta^*\|}{h}=\frac{\left\langle \eta^*,\eta \right\rangle}{\|\eta^*\|}.
$$
If $h>0$, then dividing $h$ and sending $h \rightarrow 0^+$ in (\ref{Le1-eq}) yield
$$
\int_\tau^T\left\langle\sqrt{\beta}\varphi(t;\eta^*),\sqrt{\beta}\varphi(t;\eta)
\right\rangle dt-\left\langle y_d,\eta \right\rangle
+\varepsilon_0\frac{\left\langle \eta^*,\eta \right\rangle}{\|\eta^*\|}+\left\langle y_0,\varphi(0;\eta) \right\rangle\geq0.
$$
If $h<0$, then dividing $h$ and sending $h \rightarrow 0^-$ in (\ref{Le1-eq}) yield
$$
\int_\tau^T\left\langle\sqrt{\beta}\varphi(t;\eta^*),\sqrt{\beta}\varphi(t;\eta)
\right\rangle dt-\left\langle y_d,\eta \right\rangle
+\varepsilon_0\frac{\left\langle \eta^*,\eta \right\rangle}{\|\eta^*\|}+\left\langle y_0,\varphi(0;\eta) \right\rangle\leq0.
$$
To sum up, we have the following Euler-Lagrange equation
\begin{equation}\label{E-L}
\int_\tau^T\left\langle\sqrt{\beta}\varphi(t;\eta^*),\sqrt{\beta}\varphi(t;\eta)
\right\rangle dt-\left\langle y_d,\eta \right\rangle
+\varepsilon_0\frac{\left\langle \eta^*,\eta \right\rangle}{\|\eta^*\|}+\left\langle y_0,\varphi(0;\eta) \right\rangle=0.
\end{equation}
Plugging (\ref{Le1-eq5}) into (\ref{E-L}) and letting $u=u^*=\chi_{(\tau,T)}\sqrt{\beta}\varphi(\cdot;\eta^*)$, we obtain
$$
\left\langle y(T;y_0,\beta;u^*)-y_d,\eta \right\rangle+\varepsilon_0\frac{\left\langle \eta^*,\eta \right\rangle}{\|\eta^*\|}=0.
$$
If particular, by choosing $\eta=\eta^*$,
\begin{equation}\label{Le1-eq6}
\left\langle y(T;y_0,\beta;u^*)-y_d,\eta^* \right\rangle=-\varepsilon_0\|\eta^*\|.
\end{equation}

Suppose now there is another admissible control $\bar{u}$ such that
$$
\|y(T;y_0,\beta;\bar{u})-y_d\|\leq\varepsilon_0.
$$
Then we have by (\ref{Le1-eq6}),
$$
\left\langle y(T;y_0,\beta;\bar{u})-y_d,\eta^*\right\rangle\geq-\|y(T;y_0,\beta;\bar{u})-y_d\|\|\eta^*\|\geq-\varepsilon_0\|\eta^*\|=\left\langle y(T;y_0,\beta;u^*)-y_d,\eta^* \right\rangle.
$$
Using (\ref{Le1-eq5}) again, we arrive at
$$
\int_\tau^T\left\langle\sqrt{\beta}\varphi(t;\eta^*),\bar{u}
\right\rangle dt\geq\int_\tau^T\left\langle\sqrt{\beta}\varphi(t;\eta^*),u^*
\right\rangle dt.
$$
Since $u^*=\chi_{(\tau,T)}\sqrt{\beta}\varphi(\cdot;\eta^*)$, we obtain
$$
\int_\tau^T\|u^*\|^2dt\leq\int_\tau^T\left\langle u^*,\bar{u}
\right\rangle dt,
$$
and by Cauchy-Schwarz inequality
$$
\int_\tau^T\|u^*\|^2dt\leq\int_\tau^T\|\bar{u}\|^2dt,
$$
which implies the optimality of $u^*$.

{\it Step 4. Show that $V_{\varepsilon_0}(\beta)=-\displaystyle\frac{1}{2}N(y_0,\beta)^2$.}

According to Euler-Lagrange equation (\ref{E-L}) and choosing $\eta=\eta^*$, it yields
\begin{equation}\label{Le1-eq7}
\int_\tau^T\|\sqrt{\beta}\varphi(t;\eta^*)\|^2 dt-\left\langle y_d,\eta^* \right\rangle
+\varepsilon_0\|\eta^*\|+\left\langle y_0,\varphi(\tau;\eta^*) \right\rangle=0.
\end{equation}
Setting ${\sqrt{\beta}}\varphi(\cdot;\eta^*)={u^*}$ in (\ref{Le1-eq7}) yield
\begin{equation*}\label{Le1-eq8}
\int_\tau^T\|u^*\|^2 dt=\left\langle y_d,\eta^* \right\rangle
-\varepsilon_0\|\eta^*\|-\left\langle y_0,\varphi(0;\eta^*) \right\rangle,
\end{equation*}
which, along with the definition of $N(y_0,\beta)$ (see (\ref{RNP-xi-omega})), it stands
\begin{equation}\label{Le1-eq9}
N(y_0,\beta)^2=\left\langle y_d,\eta^* \right\rangle
-\varepsilon_0\|\eta^*\|-\left\langle y_0,\varphi(0;\eta^*) \right\rangle.
\end{equation}
On the other hand, we have by (\ref{Le1-eq9})
\begin{align*}
V_{\varepsilon_0}(\beta)=&J_{\varepsilon_0}(\eta^*,\beta)=\frac{1}{2}\int_\tau^T\|\sqrt{\beta}\varphi(t;\eta^*)\|^2dt+\left\langle y_0,\varphi(0;\eta^*) \right\rangle-\left\langle y_d,\eta^* \right\rangle+\varepsilon_0\|\eta^*\|\\
=&\frac{1}{2}\int_\tau^T\|u^*\|^2dt+\left\langle y_0,\varphi(0;\eta^*) \right\rangle-\left\langle y_d,\eta^* \right\rangle+\varepsilon_0\|\eta^*\|=\frac{1}{2}N(y_0,\beta)^2-N(y_0,\beta)^2\\
=&-\frac{1}{2}N(y_0,\beta)^2.
\end{align*}
The proof is completed.
\end{proof}

\section{The equivalent problems}
In this section, we present four equivalent formulations of problem {\bf (ROP)}, namely, problem {\bf (SP1)}, problem {\bf (SP2)}, problem {\bf (SP3)}, and problem {\bf (SP4)}.

\subsection{The first equivalent problem}
In this subsection, we introduce the first  problem {\bf (SP1)}. From Lemma \ref{Le1}, we have
  \begin{equation}\label{1.09}
\begin{split}
     \frac{1}{2}(N^*)^2
    &= \inf\limits_{\beta\in\cB} \displaystyle\frac{1}{2} N(\beta)^2=\inf\limits_{\beta\in\cB}\sup\limits_{\|y_0\|\le 1} \displaystyle\frac{1}{2} N(y_0, \beta)^2\\
    &=\inf\limits_{\beta\in\cB}\sup\limits_{\|y_0\|\le 1}\left[- \inf\limits_{\eta\in L^2(\Omega)}\left(\displaystyle\frac{1}{2}\left\|
\sqrt{\beta}\varphi(\cdot;\eta)\right\|^2_{L^2(\Omega\times(\tau,T))}+\langle y_0,\varphi(0;\eta)\rangle-\langle y_d,\eta\rangle+\varepsilon_0\|\eta\|\right)\right]\\
    &=-\sup\limits_{\beta\in\cB}\inf\limits_{\|y_0\|\le 1} \inf\limits_{\eta\in L^2(\Omega)}\left[ \displaystyle\frac{1}{2}\left\|
\sqrt{\beta}\varphi(\cdot;\eta)\right\|^2_{L^2(\Omega\times(\tau,T))}+\langle y_0,\varphi(0;\eta)\rangle-\langle y_d,\eta\rangle+\varepsilon_0\|\eta\|\right]\\
&=-\sup\limits_{\beta\in\cB} \inf\limits_{\eta\in L^2(\Omega)}\inf\limits_{\|y_0\|\le 1}\left[ \displaystyle\frac{1}{2}\left\|
\sqrt{\beta}\varphi(\cdot;\eta)\right\|^2_{L^2(\Omega\times(\tau,T))}+\langle y_0,\varphi(0;\eta)\rangle-\langle y_d,\eta\rangle+\varepsilon_0\|\eta\|\right]\\
&=-\sup\limits_{\beta\in\cB} \inf\limits_{\eta\in L^2(\Omega)}\left[ \displaystyle\frac{1}{2}\left\|
\sqrt{\beta}\varphi(\cdot;\eta)\right\|^2_{L^2(\Omega\times(\tau,T))}-\|\varphi(0;\eta)\|-\langle y_d,\eta\rangle+\varepsilon_0\|\eta\|\right],
        \end{split}
\end{equation}
where  \eqref{dual-relationship} was  used in the third equality.
Define a cost functional $J(\cdot,\cdot)$ by
\begin{equation}\label{cost-function}
J(\beta,\eta)=\displaystyle\frac{1}{2}\left\langle
\beta,\,G(\eta)\right\rangle-\|\varphi(0;\eta)\|-\langle y_d,\eta\rangle+\varepsilon_0\|\eta\|
\quad \text{for any }\beta\in \cB,\, \eta\in  L^2(\Omega),
\end{equation}
where $G(\eta)$ is defined in (\ref{definition-of-G}).

By utilizing the expression in \eqref{1.09}, we can reformulate problem {\bf (ROP)} as a two-level optimization problem, commonly referred to as a Stackelberg game problem.
\begin{equation*}
{\bf (SP1)}\quad\ \sup\limits_{\beta\in{\cB}}\inf\limits_{\eta\in L^2(\Omega)}
J(\beta,\eta).
\end{equation*}

\begin{remark}\label{remark1-2}
The equivalence between problem {\bf (ROP)} and problem {\bf (SP1)} is evident. For any given $\beta\in\cB$, the continuity of $G$ implies that
\begin{equation}\label{continuity-of-J}
J(\beta,\cdot)\in C(L^2(\Omega)).
\end{equation}
However, the term $-\|\varphi(\tau;\eta)\|$ in \eqref{cost-function} is not convex with respect to $\eta$, leading to the non-convexity of $J(\beta,\cdot)$ for any $\beta\in\cB$. This constitutes the primary challenge that we must overcome.
\end{remark}
\subsection{The second equivalent problem}
In this subsection, we will present the second equivalent problem, denoted as {\bf (SP2)}, which will be given later. For a fixed $\beta\in\cB$, we will focus on the first-level optimization of problem {\bf (SP1)}. i.e.
\begin{equation*}\label{1-18}
\inf\limits_{\eta\in L^2(\Omega)}J(\beta,\eta).
\end{equation*}
In order to ensure compactness, we will confine the domain of $J(\beta,\cdot)$ within a closed sphere centered at the origin. To facilitate this, we introduce some notations. Due to the density of the range of the operator $e^{A T}$ in $L^2(\Omega)$ (as shown in \cite{cannarsa2008carleman,alabau2006carleman}), there exists $\hat y_0\in L^2(\Omega)$ such that
\begin{equation}\label{definition-of-y1}
\left\|e^{A T}\hat y_0-y_d\right\|< \frac{\varepsilon_0}{2}.
\end{equation}
 Let  \begin{equation}\label{definition-of-delta}
 \delta_0=\frac{C_\lambda}{\varepsilon_0}(1+\|\hat y_0\|)^2,
 \end{equation}
 where $C_\lambda$ is the constant in (\ref{inequality}). 
 Hereafter, we denote
 \begin{equation}\label{definition of B}
\hat{\cB}=\left\{\eta\in L^2(\Omega)\bigm| \|\eta\|\leq\delta_0\right\}.
 \end{equation}
Besides,
 let
\begin{equation}\label{1.15}
D_\beta=\left\{\eta\in L^2(\Omega)\bigm|J(\beta,\eta)\leq0\right\},
\end{equation}
where $J(\cdot,\cdot)$ is defined in  \eqref{cost-function}.
The non-emptiness of $D_\beta$ is evident as $0$ belongs to $D_\beta$. Now, we present the following result pertaining to the relationship between $\hat{\cB}$ and $D_\beta$.

\begin{lemma}\label{lemma3}
Let $\beta\in{\cB}$ be given, and let $\hat{\cB}$ and $D_\beta$ be   defined by \eqref{definition of B} and \eqref{1.15}, respectively. Then,
\begin{equation*}
D_\beta\subseteq \hat{\cB} \text{ for any }\beta\in\cB.
\end{equation*}
 \end{lemma}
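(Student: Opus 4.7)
The plan is to take an arbitrary $\eta \in D_\beta$ and squeeze out the bound $\|\eta\| \leq \delta_0$ by exploiting three ingredients in order: the sign condition $J(\beta,\eta) \leq 0$, the observability inequality \eqref{inequality}, and the approximation property \eqref{definition-of-y1} of the special element $\hat y_0$. First I would simply rewrite $J(\beta,\eta) \leq 0$ as
\begin{equation*}
\varepsilon_0\|\eta\| + \tfrac{1}{2}\bigl\langle \beta, G(\eta)\bigr\rangle \leq \|\varphi(0;\eta)\| + \langle y_d,\eta\rangle,
\end{equation*}
which already contains every quantity I want to estimate.

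Next I would feed in the duality relation. Taking $u = 0$ in \eqref{Le1-eq5}, the pairing of $\hat y_0$ with $\varphi(0;\eta)$ equals the pairing of the free evolution $e^{AT}\hat y_0$ with $\eta$, so that
\begin{equation*}
\langle y_d,\eta\rangle = \langle y_d - e^{AT}\hat y_0,\eta\rangle + \langle \hat y_0,\varphi(0;\eta)\rangle \leq \tfrac{\varepsilon_0}{2}\|\eta\| + \|\hat y_0\|\,\|\varphi(0;\eta)\|,
\end{equation*}
by Cauchy--Schwarz together with \eqref{definition-of-y1}. Plugging this back and using \eqref{inequality} in the form $\tfrac{1}{2}\langle \beta, G(\eta)\rangle \geq \tfrac{1}{2C_\lambda}\|\varphi(0;\eta)\|^2$, I would arrive at
\begin{equation*}
\tfrac{\varepsilon_0}{2}\|\eta\| \leq (1+\|\hat y_0\|)\,\|\varphi(0;\eta)\| - \tfrac{1}{2C_\lambda}\|\varphi(0;\eta)\|^2.
\end{equation*}

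Finally, writing $p := \|\varphi(0;\eta)\|$ and $M := 1+\|\hat y_0\|$, the right-hand side is a downward parabola in $p$, so by completing the square $Mp - \tfrac{p^2}{2C_\lambda} \leq \tfrac{M^2 C_\lambda}{2}$ uniformly in $p \geq 0$. This yields $\tfrac{\varepsilon_0}{2}\|\eta\| \leq \tfrac{C_\lambda(1+\|\hat y_0\|)^2}{2}$, i.e.\ $\|\eta\| \leq \delta_0$, which is exactly the definition of $\hat{\cB}$. No single step looks delicate, but the one that deserves the most care is the duality identification $\langle e^{AT}\hat y_0,\eta\rangle = \langle \hat y_0,\varphi(0;\eta)\rangle$ under the operator $A - a(x)I$ used in \eqref{state}--\eqref{adjoint-state}; I would justify it by invoking \eqref{Le1-eq5} with $u \equiv 0$ rather than manipulating semigroups directly, since \eqref{Le1-eq5} is already established in the preceding lemma and transparently bridges the state and adjoint systems.
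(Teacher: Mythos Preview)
Your proposal is correct and follows essentially the same route as the paper's proof: both start from $J(\beta,\eta)\le 0$, split $\langle y_d,\eta\rangle$ via $\hat y_0$ and the bound \eqref{definition-of-y1}, invoke the observability inequality \eqref{inequality}, and then maximize the resulting downward quadratic in $\|\varphi(0;\eta)\|$ to reach $\|\eta\|\le \delta_0$. Your explicit appeal to \eqref{Le1-eq5} with $u\equiv 0$ to justify the duality relation is in fact a bit more transparent than the paper's rewriting in \eqref{1.17}, and your flag about the potential term $a(x)$ is well taken.
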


  \begin{proof}
Rewrite $J(\beta,\eta)$  from \eqref{cost-function} as
\begin{equation}\label{1.17}
 \begin{split}
 J(\beta,\eta)=&\displaystyle\frac{1}{2}\left\langle
\beta,\,G(\eta)\right\rangle-\|\varphi(0;\eta)\|-\langle y_d,\eta\rangle+\varepsilon_0\|\eta\|
 \\=& \displaystyle\frac{1}{2}\left\langle
\beta,\,G(\eta)\right\rangle-\|\varphi(0;\eta)\|-\langle\hat y_0,\varphi(0;\eta)\rangle-\langle y_1,\eta\rangle+\tau\varepsilon_0\|\eta\|,
 \end{split}
\end{equation}
where
$$
y_1:= y_d-e^{A T}\hat y_0.
$$
By  \eqref{definition-of-y1}, it follows  that
\begin{equation}\label{1.18}
\|y_1\|<\frac{\varepsilon_0}{2}.
\end{equation}

Now assume that
 $\eta\in D_\beta$.
Then,  it follows from \eqref{1.15} and \eqref{1.17}  that
\begin{equation}\label{bz2}
\varepsilon_0\|\eta\|-\langle y_1, \eta\rangle
   \leq\displaystyle -\frac{1}{2}\langle \beta, G(\eta)\rangle+(1+\|\hat y_0\|)\cdot\|\varphi(0;\eta)\|.
\end{equation}
  On the other hand, by  \eqref{1.18},
\begin{equation}\label{bz3}
\displaystyle\frac{\varepsilon_0}{2}\|\eta\|\leq
    \varepsilon_0\|\eta\|-\langle y_1, \eta\rangle.
\end{equation}
The above two inequalites (\ref{bz2}) and (\ref{bz3}), together with
 the observation inequality \eqref{inequality}, lead to
 \begin{equation*}
 	\begin{split}
 \displaystyle\frac{\varepsilon_0}{2}\|\eta\|&\leq
  -\frac{1}{2}\langle \beta, G(\eta)\rangle+(1+\|\hat y_0\|)\cdot\|\varphi(0;\eta)\|
 \leq \displaystyle \left[-\frac{1}{2C_\lambda}\|\varphi(0;\eta)||^2+(1+\|\hat y_0\|)\cdot\|\varphi(0; \eta)\|\right]
 \\
 &\leq\sup\limits_{\hat \eta\in L^2(\Omega)}\displaystyle \left[-\frac{1}{2C_\lambda}\|\varphi(0;\hat \eta)||^2+(1+\|\hat y_0\|)\cdot\|\varphi(0;\hat \eta)\|\right]
 \leq\displaystyle\frac{C_\lambda}{2}(1+\|\hat y_0\|)^2.
 \end{split}
 \end{equation*}
 This, together with \eqref{definition-of-delta} and \eqref{definition of B}, implies that
  $\eta\in \hat{\cB}$. Therefore, $D_\beta\subseteq\hat{\cB}$. The proof is completed.
\end{proof}

Now, we present the second equivalent problem. Let $\beta\in{\cB}$.
Since $$\inf\limits_{\eta\in L^2(\Omega)}J(\beta, \eta)\leq J(\beta, 0)=0$$ and  $D_\beta\subseteq\hat{\cB}$ from Lemma \ref{lemma3},
it is derived that
$$
\inf\limits_{\eta\in L^2(\Omega)}J(\beta,\eta)=\inf\limits_{\eta\in D_\beta}J(\beta, \eta)=\inf\limits_{\eta\in \hat{\cB}}J(\beta, \eta).
$$
Therefore, problem  {\bf (ROP)} is also equivalent to the following problem
\begin{equation*}
{\bf (SP2)}\quad\ \sup\limits_{\beta\in\cB}\inf\limits_{\eta\in \hat{\cB}}J(\beta, \eta).
\end{equation*}
In addition, we have the following estimate from Lemma \ref{lemma2} and \eqref{definition of B} that
\begin{equation}\label{estimate-25}
0=J(\beta, 0)\geq \inf\limits_{\eta\in \hat{\cB}}J(\beta, \eta)
\geq  \inf\limits_{\eta\in \hat{\cB}}\left[\frac{1}{2C_\lambda}\|\varphi(0;\eta)\|^2-\|\varphi(0;\eta)\|-\|y_d\|\|\eta\|\right]\geq -\frac{C_\lambda}{2}-\|y_d\|\delta_0,
\end{equation}
which means that  $\inf\limits_{\eta\in \hat{\cB}}J(\beta, \eta)$ is uniformly bounded with respect to $\beta\in\cB$.

Next, we fix $\beta\in\cB$ and focus on the first-level optimization of problem {\bf (SP2)}. i.e.,
\begin{equation}\label{1-18-1}
\inf\limits_{\eta\in  \hat{\cB}}J(\beta,\eta).
\end{equation}
The following result shows the existence of solutions to problem \eqref{1-18-1} and
{\bf (SP2)}, which will be used in the next subsection.

\begin{lemma}\label{lemma4}
For any $\beta\in{\cB}$, problem \eqref{1-18-1}
 admits at least one solution. Furthermore,
{\bf (SP2)} admits at least one  solution.
 \end{lemma}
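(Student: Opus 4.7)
The plan is to use the direct method of the calculus of variations for the inner problem at each fixed $\beta$, exploiting the weak compactness of the closed ball $\hat{\cB}$ in $L^2(\Omega)$ together with the lower semi-continuity properties of the four terms in $J(\beta,\cdot)$. The outer supremum in (SP2) is then obtained by combining weak-$*$ compactness of $\cB$ with upper semi-continuity of the value function, using that $J(\cdot,\eta)$ is affine in $\beta$.

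\textbf{Step 1 (inner problem).} Fix $\beta\in\cB$ and pick a minimizing sequence $\{\eta_n\}\subset\hat{\cB}$ for $\inf_{\eta\in\hat{\cB}}J(\beta,\eta)$. Since $\hat{\cB}$ is a closed ball in the Hilbert space $L^2(\Omega)$, it is weakly sequentially compact and weakly closed; thus, up to a subsequence, $\eta_n\rightharpoonup\eta^*$ weakly with $\eta^*\in\hat{\cB}$. I analyse the four terms of $J(\beta,\eta)$: the quadratic term $\tfrac12\langle\beta,G(\eta)\rangle=\tfrac12\|\sqrt{\beta}\varphi(\cdot;\eta)\|_{L^2(\Omega\times(\tau,T))}^2$ is the square of a continuous seminorm of a continuous linear function of $\eta$, hence convex and continuous, and so weakly l.s.c.; $-\langle y_d,\eta\rangle$ is weakly continuous; $\varepsilon_0\|\eta\|$ is convex and continuous, so weakly l.s.c. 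The delicate term is $-\|\varphi(0;\eta)\|$: here I use that the linear operator $\eta\mapsto\varphi(0;\eta)$ is \emph{compact} from $L^2(\Omega)$ to $L^2(\Omega)$, which follows from the smoothing property of the semigroup $\{e^{At}\}_{t\geq 0}$ on the time interval $[0,T]$ (see \cite{cannarsa2008carleman}). Consequently $\varphi(0;\eta_n)\to\varphi(0;\eta^*)$ strongly and $\|\varphi(0;\eta_n)\|\to\|\varphi(0;\eta^*)\|$. Adding the four inequalities gives $J(\beta,\eta^*)\leq\liminf_n J(\beta,\eta_n)$, so $\eta^*$ solves \eqref{1-18-1}.

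\textbf{Step 2 (outer problem).} Define the value function
\[
F(\beta):=\inf_{\eta\in\hat{\cB}}J(\beta,\eta),
\]
which is finite by \eqref{estimate-25} and attained by Step~1. For each fixed $\eta\in\hat{\cB}$, the map $\beta\mapsto J(\beta,\eta)$ is affine in $\beta$, the only $\beta$-dependent piece being the linear pairing $\tfrac12\langle\beta,G(\eta)\rangle$ against $G(\eta)\in L^1(\Omega_1)$; hence $J(\cdot,\eta)$ is continuous on $\cB$ equipped with the weak-$*$ topology of $L^\infty(\Omega_1)$. As a pointwise infimum of a family of weak-$*$ continuous functions, $F$ is weak-$*$ upper semi-continuous on $\cB$. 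Since $\cB$ is a convex weak-$*$-closed subset of the closed unit ball of $L^\infty(\Omega_1)$, Banach--Alaoglu gives that $\cB$ is weak-$*$ compact (and sequentially so, by separability of $L^1(\Omega_1)$). A weak-$*$ upper semi-continuous function on a weak-$*$ compact set attains its supremum, so (SP2) possesses at least one solution.

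\textbf{Main obstacle.} The subtle ingredient is the compactness of $\eta\mapsto\varphi(0;\eta)$: without it only weak (rather than strong) convergence of $\varphi(0;\eta_n)$ would be available, and the norm $\|\cdot\|$ is not weakly continuous, so the non-convex term $-\|\varphi(0;\eta)\|$ could fail to be weakly l.s.c. In the non-degenerate heat case this compactness is standard; in the present degenerate setting it must be extracted from the regularizing effect for $\{e^{At}\}_{t>0}$ on $L^2(\Omega)$ established in \cite{cannarsa2008carleman} (equivalently, a spectral expansion in the eigenbasis of $A$ together with exponential decay of the eigenmodes). Once this is in hand, the rest of the argument is a standard direct-method/minimax compactness argument.
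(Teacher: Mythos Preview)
Your proof is correct and follows essentially the same route as the paper. Two small remarks on presentation: (i) in Step~1 the paper obtains strong convergence of $\varphi(\cdot;\eta_n)$ in $L^2(0,T;L^2(\Omega))$ and of $\varphi(0;\eta_n)$ in $L^2(\Omega)$ simultaneously, by invoking the compact embedding $H_\alpha^1(\Omega)\hookrightarrow L^2(\Omega)$ (referenced to \cite{sun2022fundamental,alabau2006carleman}) rather than the semigroup smoothing from \cite{cannarsa2008carleman}; this yields genuine convergence of the quadratic term rather than the weak l.s.c.\ inequality you use, but either suffices. (ii) In Step~2 the paper argues directly with a maximizing sequence $\{\beta_n\}$ and the minimizer $\eta_{\hat\beta}$ from Step~1, whereas you package the same idea as ``an infimum of weak-$*$ continuous affine functions is weak-$*$ u.s.c.''; these are equivalent formulations of the same compactness argument.
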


 \begin{proof} 
 We divide the proof into two steps.
 
 {\it Step 1. Problem \eqref{1-18-1}
 admits at least one solution.} 
 
 Given the boundedness of $\inf\limits_{\eta\in \hat{\cB}}J(\beta, \eta)$ (as seen in \eqref{estimate-25}), we can find a minimizing sequence $\{\eta_n\}_{n\geq1}\subseteq\hat{\cB}$ for problem \eqref{1-18-1}.
By extracting a subsequence from ${\eta_n}$, which we will also denote as ${\eta_n}$ for simplicity, we can identify an element $\hat \eta\in L^2(\Omega)$ such that
$$
\eta_n\rightarrow  \hat \eta\quad \text{weakly in }L^2(\Omega).
$$
We claim that  $\hat \eta(\cdot)$ solves problem \eqref{1-18-1}. In fact,
$$
\lim\limits_{n\rightarrow\infty}\langle y_d, \eta_n\rangle=\langle y_d, \hat \eta\rangle
 \text{ and }
 \varliminf\limits_{n\rightarrow\infty}\| \eta_n\|\geq \|\hat \eta\|,
 $$
and hence $\hat \eta\in\hat{\cB}$. Furthermore, since the embedding $H_\alpha^1(\O)\hookrightarrow L^2(\O)$ is compact, see \cite{sun2022fundamental}, also see section 6 in \cite{alabau2006carleman}, we have
$$
\lim\limits_{n\rightarrow\infty}\varphi(\cdot; \eta_n)=\varphi(\cdot;\hat \eta)\quad\text{strongly in } L^2(0,T;L^2(\Omega))
$$
 and
 $$
\lim\limits_{n\rightarrow\infty}\varphi(0; \eta_n)=\varphi(0;\hat \eta)\quad\text{strongly in } L^2(\Omega).
$$
Based on the information provided and the definition of $J(\cdot,\cdot)$ in \eqref{cost-function}, we can conclude that $$\lim\limits_{n\rightarrow\infty}J(\beta, \eta_n)\geq J(\beta, \hat \eta).$$ In simpler terms, it means that \eqref{1-18-1} has at least one solution, denoted by $\hat \eta$.

{\it Step 2. {\bf (SP2)} admits at least one  solution.} 

By virtue of the uniform boundedness property (refer to \eqref{estimate-25}), we can identify a maximizing sequence $\{\beta_n\}_{n\geq1}\subseteq\cB$ such that
\begin{equation}\label{1.21}
\lim\limits_{n\rightarrow\infty}\inf\limits_{\eta \in\hat{\cB}} J(\beta_n,\eta)=\sup\limits_{\beta\in\cB}\inf\limits_{\eta \in\hat{\cB}}J(\beta,\eta).
\end{equation}
Once again, we select a subsequence, denoted as the original sequence itself, satisfying
 $$
 \lim\limits_{n\rightarrow\infty}\beta_n=\hat \beta\quad\text{weakly star in } L^\infty(\Omega).
 $$
It can be readily verified that $\hat\beta\in\cB$ and
\begin{equation}\label{1.22}
J(\hat\beta,\eta)=\lim\limits_{n\rightarrow\infty}J(\beta_n, \eta)\quad\text{for any }\eta\in L^2(\Omega).
\end{equation}
Furthermore, due to the existence of a solution to \eqref{1-18-1}, we can identify $\eta_{\hat \beta}\in \hat{\cB}$ such that
$$
J(\hat\beta,\eta_{\hat \beta})=\inf\limits_{\eta\in \hat{\cB}} J(\hat\beta,\eta).
$$
By substituting $\eta=\eta_{\hat \beta}$ into \eqref{1.22}, we deduce from the aforementioned equality that
$$
\inf\limits_{\eta\in \hat{\cB}} J(\hat\beta,\eta)=J(\hat \beta, \eta_{\hat \beta})=\lim\limits_{n\rightarrow\infty}J(\beta_n, \eta_{\hat \beta})\geq \lim\limits_{n\rightarrow\infty}\inf\limits_{\eta\in \hat{\cB}} J(\beta_n,\eta),
$$
which, together with \eqref{1.21}, shows that
\begin{equation*}
	\sup\limits_{\beta\in\cB}\inf\limits_{\eta \in\hat{\cB}}J(\beta,\eta)\leq\inf\limits_{\eta\in \hat{\cB}} J(\hat\beta,\eta)\leq \sup\limits_{\beta\in\cB}\inf\limits_{\eta \in\hat{\cB}}J(\beta,\eta),
\end{equation*}
indicating that $\hat\beta$ represents a solution to {\bf (SP2)}. The proof is completed.
\end{proof}

\subsection{The third equivalent problem}
In this subsection, we shall present the third equivalent problem, denoted as {\bf (SP3)}, which will be given later. 
Due to the non-convexity of $J(\beta,\cdot)$, we need to consider a relaxed version of problem \eqref{1-18-1}.
For this purpose, let $C_b(\hat{\cB})$ denote the space of bounded continuous functions defined on $\hat{\cB}$, equipped with the norm
$$
\|f\|_{C_b(\hat{\cB})}=\sup\limits_{\eta\in\hat{\cB}} |f(\eta)| \text{ for any }f\in C_b(\hat{\cB}).
$$
The dual space of $C_b(\hat{\cB})$ is referred to as $C_b(\hat{\cB})^*$. Furthermore, we denote the dual pairing between $C_b(\hat{\cB})^*$ and $C_b(\hat{\cB})$ as $\langle h, f\rangle_{C_b(\hat{\cB})^*,C_b(\hat{\cB})}$ or simply $h(f)$. More importantly, we define a subset of $C_b(\hat{\cB})^*$, denoted as $\cH(\hat{\cB})$, in the following manner:
\begin{equation}\label{bz4}
h\in \cH(\hat{\cB}) \hbox{ whenever  the following conditions (H1)-(H3) hold}:
\end{equation}
\begin{enumerate}[(H1)]
\item $h\geq0$, it means that
\begin{equation}\label{nonnegative}
\langle h,\, f\rangle_{C_b(\hat{\cB})^*,C_b(\hat{\cB})}\geq 0 \quad\text{when } f(\eta)\geq0,\,\forall\, \eta\in \hat{\cB};
\end{equation}
\item
\begin{equation}\label{norm}
\|h\|_{C_b(\hat{\cB})^*}= 1;
\end{equation}
\item
\begin{equation}\label{attained}
\langle h,\, 1\rangle_{C_b(\hat{\cB})^*,C_b(\hat{\cB})}= 1.
\end{equation}
\end{enumerate}

\begin{remark}
It is worth noting that $\hat{\cB}$ is not a compact set. For any $\eta\in\hat{\cB}$, we define the Dirac function $\delta_{\eta}$ as follows:
$$
\langle \delta_{\eta}, f\rangle_{C_b(\hat{\cB})^*,C_b(\hat{\cB})}=f(\eta) \text{ for any }f\in C_b(\hat{\cB}).
$$
A straightforward verification confirms that
\begin{equation*}
\delta_\eta\in \cH(\hat{\cB}).
\end{equation*}
Hence, we can consider $\cH(\hat{\cB})$ as a relaxation of $\hat{\cB}$.
\end{remark}

From \eqref{continuity-of-J} and \eqref{estimate-25}, we know that
$
J(\beta,\cdot)\in C_b(\hat{\cB})
$
for any $\beta\in \cB$. Besides,  define a cost functional $\widetilde J:\cB\times  \cH(\hat{\cB})\mapsto\mathbb{R}$ by
\begin{equation}\label{cost-function1}
\widetilde J(\beta,h):= \langle h, J(\beta,\cdot)\rangle _{C_b(\hat{\cB})^*,C_b(\hat{\cB})}:=h \bigr(J(\beta,\cdot)\bigl)
\quad \text{for any }\beta\in \cB,\, h\in \cH(\hat{\cB}).
\end{equation}
Now  we propose the following problem
$$
\inf\limits_{h\in \cH(\hat{\cB})}\widetilde J(\beta,h).
$$
Notice that
\begin{equation}\label{1-23}
\widetilde J(\beta,\delta_\eta)=J(\beta,\eta) \quad\text{for any }\beta\in\cB,\, \eta\in\hat{\cB}.
\end{equation}
Thus we could  regard the above problem as a relaxed version of \eqref{1-18-1}. Furthermore, we introduce the third problem:
\begin{equation*}
{\bf (SP3)}\quad\ V^-:=\sup\limits_{\beta\in\cB}\inf\limits_{h\in \cH(\hat{\cB})}\widetilde J(\beta,h).
\end{equation*}

As stated in previous  discussions, we have already proved that the following three expressions are equivalent:
\begin{equation}\label{equivalence}
{\bf (ROP)}\Longleftrightarrow {\bf (SP1)}\Longleftrightarrow{\bf (SP2)}.
\end{equation}
Now we are going to show the following equivalence
\begin{equation}\label{equivalence-1}
{\bf (SP2)}\Longleftrightarrow{\bf (SP3)}.
\end{equation}

In order to establish the equivalence (\ref{equivalence-1}), we rely on the following Lemma \ref{Le5}.

\begin{lemma}\label{Le5}  Let $\beta\in \cB$. Then, there holds
\begin{equation*}
\inf\limits_{\eta\in\hat{\cB}} J(\beta,\eta)=\inf\limits_{h\in \cH(\hat{\cB})}\widetilde J(\beta,h).
\end{equation*}
\end{lemma}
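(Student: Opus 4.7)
The plan is to prove the equality by two opposite inequalities, using the fact that elements of $\cH(\hat{\cB})$ behave essentially like probability measures on $\hat{\cB}$ (positivity together with $h(1)=1$), while Dirac masses at points of $\hat{\cB}$ provide a canonical embedding of $\hat{\cB}$ into $\cH(\hat{\cB})$.

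\textbf{Step 1 (the easy direction $\geq$).} For every $\eta\in\hat{\cB}$, the Dirac mass $\delta_\eta$ lies in $\cH(\hat{\cB})$: indeed it is positive, $\|\delta_\eta\|_{C_b(\hat{\cB})^*}=1$ and $\delta_\eta(1)=1$. Combining this with the identity \eqref{1-23}, i.e.\ $\widetilde J(\beta,\delta_\eta)=J(\beta,\eta)$, I obtain
\begin{equation*}
\inf_{h\in \cH(\hat{\cB})}\widetilde J(\beta,h)\leq \inf_{\eta\in\hat{\cB}}\widetilde J(\beta,\delta_\eta)=\inf_{\eta\in\hat{\cB}}J(\beta,\eta).
\end{equation*}

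\textbf{Step 2 (the nontrivial direction $\leq$).} Set $m:=\inf_{\eta\in\hat{\cB}}J(\beta,\eta)$. By \eqref{estimate-25}, $m$ is a finite real number, so the constant function $m\cdot 1$ belongs to $C_b(\hat{\cB})$. For every $\eta\in \hat{\cB}$ the function $f:=J(\beta,\cdot)-m\cdot 1$ is nonnegative (by definition of infimum), and it is bounded and continuous by \eqref{continuity-of-J} and \eqref{estimate-25}. Now fix any $h\in \cH(\hat{\cB})$. Applying property (H1) (nonnegativity, \eqref{nonnegative}) to $f$ yields $h(f)\geq 0$, and property (H3) (\eqref{attained}) gives $h(m\cdot 1)=m\cdot h(1)=m$. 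By linearity of $h$ on $C_b(\hat{\cB})$,
\begin{equation*}
\widetilde J(\beta,h)=h\bigl(J(\beta,\cdot)\bigr)=h(f)+h(m\cdot 1)\geq m=\inf_{\eta\in\hat{\cB}}J(\beta,\eta).
\end{equation*}
Taking the infimum over $h\in\cH(\hat{\cB})$ produces the reverse inequality.

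\textbf{Main obstacle.} The only subtle point is to make sure that $J(\beta,\cdot)$ really does belong to $C_b(\hat{\cB})$, so that the action of $h$ on it is well defined; this is exactly what \eqref{continuity-of-J} (continuity) and \eqref{estimate-25} (boundedness on $\hat{\cB}$) guarantee. Once this is in place, the proof reduces to the two short computations above and no further approximation or Hahn--Banach argument is needed. Combining Steps 1 and 2 yields the claimed equality.
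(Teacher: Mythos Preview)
Your proof is correct and follows essentially the same approach as the paper's: one inequality via the Dirac embedding $\eta\mapsto\delta_\eta$, and the reverse via positivity (H1) together with the normalization $h(1)=1$ (H3). The only minor difference is that the paper invokes Lemma~\ref{lemma4} to obtain an actual minimizer $\eta_\beta$ and then applies (H1)--(H3) to $J(\beta,\cdot)-J(\beta,\eta_\beta)$, whereas you work directly with the infimum value $m$; your version is slightly cleaner since it does not rely on attainment of the infimum.
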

\begin{proof}
By \eqref{1-23}, we have
\begin{equation*}\label{1.30}
\inf\limits_{\eta\in\hat{\cB}} J(\beta,\eta)\geq \inf\limits_{h\in \cH(\hat{\cB})}\widetilde J(\beta,h).
\end{equation*}
On the other hand,  it follows from Lemma \ref{lemma4} that there is $\eta_\beta\in\hat{\cB}$ such that
$$
J(\beta,\eta_\beta)=\inf\limits_{\eta\in\hat{\cB}} J(\beta,\eta),
$$
i.e,  $J(\beta,\cdot)-J(\beta,\eta_\beta)\geq 0$ on $\cB$. This together with the properties  of $h\in \cH(\hat{\cB})$ (see \eqref{nonnegative} and \eqref{attained}), implies  that
$$
h\bigr(J(\beta,\cdot)\bigl)\geq h\bigr(J(\beta,\eta_\beta)\bigl)=J(\beta,\eta_\beta)h(1)=J(\beta,\eta_\beta),
$$
which along with (\ref{cost-function1}), we thus arrive at
$$
\inf\limits_{h\in \cH(\hat{\cB})}\widetilde J(\beta,h)\geq J(\beta,\eta_\beta)=\inf\limits_{\eta\in\hat{\cB}} J(\beta,\eta).
$$
This completes the  proof.
\end{proof}
Utilizing the aforementioned Lemma \ref{Le5}, we can establish the validity of the equivalence (\ref{equivalence-1}). Consequently, the four problems enumerated in \eqref{equivalence} are demonstrated to be equivalent. Furthermore, as deduced from Lemma \ref{lemma4}, we have the following conclusion.
\begin{corollary} \label{remark1} 
The problem {\bf (ROP)} (or {\bf (SP3)}) possesses at least one solution, and $\beta^*\in\cB$ serves as a solution to {\bf (ROP)} if and only if it satisfies {\bf (SP3)}.
\end{corollary}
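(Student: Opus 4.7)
The plan is to obtain Corollary \ref{remark1} by stringing together the equivalences already established in the paper and invoking the existence result from Lemma \ref{lemma4}. There is essentially no new content: the work has been done, and the corollary just packages it.

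First, I would recall the chain \eqref{equivalence}, namely $\mathbf{(ROP)}\Leftrightarrow\mathbf{(SP1)}\Leftrightarrow\mathbf{(SP2)}$, which was verified earlier via Lemma \ref{Le1} (yielding the dual identity $V_{\varepsilon_0}(\beta)=-\tfrac{1}{2}N(y_0,\beta)^2$ and hence formulation \eqref{1.09}), and via Lemma \ref{lemma3} (restricting the inner infimum from $L^2(\Omega)$ to $\hat{\cB}$). Then Lemma \ref{Le5} gives, for every fixed $\beta\in\cB$,
\begin{equation*}
\inf_{\eta\in\hat{\cB}}J(\beta,\eta)=\inf_{h\in\cH(\hat{\cB})}\widetilde J(\beta,h),
\end{equation*}
so taking the supremum over $\beta\in\cB$ yields $\mathbf{(SP2)}\Leftrightarrow\mathbf{(SP3)}$ in value. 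Combining with \eqref{equivalence}, the four problems share a common optimal value, and any $\beta^*\in\cB$ achieves the supremum in one of them if and only if it achieves the supremum in the others.

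Next, for the existence claim, I would appeal directly to Lemma \ref{lemma4}, which produces at least one $\hat\beta\in\cB$ such that
\begin{equation*}
\inf_{\eta\in\hat{\cB}}J(\hat\beta,\eta)=\sup_{\beta\in\cB}\inf_{\eta\in\hat{\cB}}J(\beta,\eta).
\end{equation*}
Via the pointwise-in-$\beta$ identity of Lemma \ref{Le5}, this same $\hat\beta$ satisfies
\begin{equation*}
\inf_{h\in\cH(\hat{\cB})}\widetilde J(\hat\beta,h)=V^{-},
\end{equation*}
so $\hat\beta$ solves $\mathbf{(SP3)}$; and through the chain of equivalences it also solves $\mathbf{(ROP)}$.

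Finally, for the ``if and only if'' assertion: because the equivalences $\mathbf{(ROP)}\Leftrightarrow\mathbf{(SP2)}\Leftrightarrow\mathbf{(SP3)}$ are not merely equalities of optimal values but identifications of the outer maximization over the \emph{same} variable $\beta\in\cB$ (only the inner problems differ), a given $\beta^*\in\cB$ is a maximizer for one if and only if it is a maximizer for the others. I do not anticipate any real obstacle here; the only small subtlety worth flagging explicitly is that Lemma \ref{Le5} holds for every $\beta\in\cB$, so passing to the outer sup preserves the solution set (not just the value), which is exactly what the biconditional in the statement requires.
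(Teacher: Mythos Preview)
Your proposal is correct and follows essentially the same route as the paper: the corollary is stated immediately after Lemma~\ref{Le5} as a direct consequence of the equivalence chain \eqref{equivalence}--\eqref{equivalence-1} together with the existence result of Lemma~\ref{lemma4}. Your write-up is in fact more explicit than the paper's, since you spell out that Lemma~\ref{Le5} holds pointwise in $\beta$ and therefore preserves the solution set of the outer maximization, which is precisely what underlies the biconditional.
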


\subsection{The fourth equivalent problem}
In this subsection, we shall present the fourth equivalent problem denoted as {\bf (SP4)}, which will be given later. To facilitate our discussion, we will employ fundamental definitions and concepts from the field of two-person zero-sum game theory. For a comprehensive understanding of these concepts, we refer the readers to references such as p.121 in \cite{aubin2002optima} or section 3 in \cite{guo2015optimal}.

\begin{definition} \label{Nash equilibrium} Suppose that $ E$ and $F$ are  two sets and  $f:\,E\times  F\mapsto
\mathbb{R}$ is a functional. We call $(\bar y,\bar x)\in
 E\times    F$  a Nash equilibrium if,
$$
f(\bar y, x)\leq f(\bar y,\bar x)\leq f(y,\bar x), \quad\forall\;
y\in E, x\in   F.
$$
\end{definition}

The following result, which can be found in Proposition 8.1 of \cite{aubin2002optima}, establishes a well-known connection between the Stackelberg equilibrium and the Nash equilibrium.

 \begin{proposition} \label{proposition1} The following
conditions are equivalent.
\begin{enumerate}[(i)]
\item $(\bar y,\bar x)$ is a Nash equilibrium;
\item $V^+=V^-$ and  $\bar y$ satisfies the following equation:
$$
V^+:=\inf_{y\in   E}\sup_{x\in  F} f(y,x)=\sup\limits_{x\in  F} f(\bar y,x),
$$
 and $\bar x$ satisfies the following equation:
$$
V^-:=\sup_{x\in   F}\inf_{y\in  E} f(y,x)= \inf\limits_{y\in   E} f(y,\bar x).
$$
\end{enumerate}
When $V^+=V^-$, the common value $V=V^+=V^-$  is called the value of the game.
\end{proposition}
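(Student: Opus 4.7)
The plan is to prove the equivalence by direct manipulation of the sup/inf inequalities, relying on the "weak duality" fact that always holds, namely that $V^-\leq V^+$. This is immediate from the pointwise inequality $\inf_{y\in E} f(y,x)\leq f(y',x)\leq \sup_{x'\in F}f(y',x')$ valid for any $y'\in E$, $x'\in F$; taking the sup over $x$ on the left and the inf over $y'$ on the right yields the claim. I will use this as the starting point of both directions.

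For the direction (i)$\Rightarrow$(ii), I would start from the Nash inequalities $f(\bar y,x)\leq f(\bar y,\bar x)\leq f(y,\bar x)$ for all $y\in E$, $x\in F$. Taking the supremum over $x\in F$ on the left-hand inequality gives $\sup_{x\in F}f(\bar y,x)\leq f(\bar y,\bar x)$, while taking the infimum over $y\in E$ on the right-hand one gives $f(\bar y,\bar x)\leq \inf_{y\in E}f(y,\bar x)$. Chaining these with the trivial bounds $\inf_{y\in E}f(y,\bar x)\leq V^-$ and $V^+\leq \sup_{x\in F}f(\bar y,x)$, one obtains
\begin{equation*}
V^+\leq \sup_{x\in F}f(\bar y,x)\leq f(\bar y,\bar x)\leq \inf_{y\in E}f(y,\bar x)\leq V^-.
\end{equation*}
Combined with weak duality $V^-\leq V^+$, every inequality above is in fact an equality. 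This simultaneously gives $V^+=V^-=f(\bar y,\bar x)$, that $\bar y$ realizes the outer infimum in $V^+$, and that $\bar x$ realizes the outer supremum in $V^-$.

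For the converse (ii)$\Rightarrow$(i), write $V:=V^+=V^-$. The assumption that $\bar y$ attains $V^+$ as $\sup_{x\in F}f(\bar y,x)=V$ yields $f(\bar y,x)\leq V$ for every $x\in F$; similarly $\inf_{y\in E}f(y,\bar x)=V$ forces $f(y,\bar x)\geq V$ for every $y\in E$. Specializing to $(y,x)=(\bar y,\bar x)$ in both inequalities pins down $f(\bar y,\bar x)=V$, and then the two inequalities read precisely as the Nash equilibrium condition of Definition \ref{Nash equilibrium}.

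Neither implication should present a serious obstacle: the proof is essentially a careful bookkeeping of weak duality together with the one-sided suprema/infima, and the only subtlety is remembering to evaluate both one-sided bounds at $(\bar y,\bar x)$ to identify the common value $V$ with $f(\bar y,\bar x)$. Since the statement is attributed to Aubin's monograph, I would simply cite \cite{aubin2002optima} for the record and keep the argument short.
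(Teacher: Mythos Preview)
Your argument is correct and is the standard proof of this classical equivalence. Note that the paper does not actually prove Proposition~\ref{proposition1}; it simply quotes it from Aubin's monograph (Proposition~8.1 in \cite{aubin2002optima}), so there is no in-paper proof to compare against, and your suggestion to cite \cite{aubin2002optima} is exactly what the authors do.
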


In the above framework of  two-person zero-sum
game problem,  take
\begin{equation*}
E=\cH(\hat{\cB}), \quad F=\cB,  \quad x=h, \quad y=\beta,\quad f(y,x)=\widetilde J(\beta,h).
\end{equation*}
Now  we  introduce the fourth problem obtained  by exchanging the order between ``sup" and ``inf" of {\bf (SP3)}. More precisely,
\begin{equation*}
 {\bf (SP4)}\quad\ V^+=\inf\limits_{h\in \cH(\hat{\cB})}\sup\limits_{\beta\in\cB}\widetilde J(\beta,h).
 \end{equation*}
Then we have the following result.
\begin{proposition}\label{Neumann}
Let
$E$, $F$,  and $f$ be defined as the above.
Then $V^+=V^-$. Besides,  {\bf (SP4)} admits at least one solution. If
$\beta^*\in\cB$  solves {\bf (ROP)} and $h^*\in\cH(\hat{\cB})$ solves {\bf (SP4)},
then
\begin{equation}\label{1-31}
\widetilde J(\beta^*,h^*) =\sup\limits_{\beta\in\cB}\widetilde J(\beta,h^*).
\end{equation}
\end{proposition}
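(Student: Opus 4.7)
The plan is to establish $V^+ = V^-$ via Sion's minimax theorem applied to the functional $\widetilde J$ on $\cH(\hat{\cB}) \times \cB$, and then to deduce the existence of a solution to \textbf{(SP4)} and the identity \eqref{1-31} by combining this equality with the Nash equilibrium characterization from Proposition \ref{proposition1}.

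The setup for Sion's theorem is as follows. The set $\cH(\hat{\cB})$ is convex by direct inspection of (H1)--(H3); moreover, conditions (H1) and (H3) imply that every $h \in \cH(\hat{\cB})$ automatically satisfies $\|h\|_{C_b(\hat{\cB})^*} = 1$, so $\cH(\hat{\cB})$ is contained in the closed unit ball of $C_b(\hat{\cB})^*$. Each defining constraint is weak-$*$ closed, so by Banach--Alaoglu the set $\cH(\hat{\cB})$ is weak-$*$ compact. The set $\cB$ is convex (and weak-$*$ compact in $L^\infty(\Omega_1)$). Writing $J(\beta, \eta) = \frac{1}{2}\langle \beta, G(\eta)\rangle + K(\eta)$, where $K(\eta) := -\|\varphi(0;\eta)\| - \langle y_d, \eta\rangle + \varepsilon_0 \|\eta\|$ does not depend on $\beta$, we see that $\widetilde J(\beta, h) = h(J(\beta,\cdot))$ is affine in each variable separately, hence simultaneously quasi-convex and quasi-concave in each variable. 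For fixed $\beta$, the map $h \mapsto \widetilde J(\beta, h)$ is weak-$*$ continuous by the very definition of the weak-$*$ topology, since $J(\beta, \cdot) \in C_b(\hat{\cB})$ by \eqref{continuity-of-J} and the uniform bound \eqref{estimate-25}.

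The main technical obstacle lies in the remaining continuity requirement: for fixed $h$, the map $\beta \mapsto \widetilde J(\beta, h)$ must be weak-$*$ continuous (or at least lower semi-continuous) on $\cB$. Since $J(\beta_n, \eta) - J(\beta, \eta) = \frac{1}{2}\langle \beta_n - \beta, G(\eta)\rangle$, this reduces to showing that weak-$*$ convergence $\beta_n \to \beta$ in $L^\infty(\Omega_1)$ implies $\langle \beta_n - \beta, G(\eta)\rangle \to 0$ uniformly for $\eta \in \hat{\cB}$, which then yields $J(\beta_n, \cdot) \to J(\beta, \cdot)$ in $C_b(\hat{\cB})$ and thus $\widetilde J(\beta_n, h) \to \widetilde J(\beta, h)$. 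To obtain this uniformity, I would establish that the image set $\{G(\eta) : \eta \in \hat{\cB}\}$ is relatively norm compact in $L^1(\Omega)$. Since $\hat{\cB}$ is weakly compact in $L^2(\Omega)$, this in turn follows from a weak-to-strong continuity of $\eta \mapsto G(\eta)$ from $L^2(\Omega)$ into $L^1(\Omega)$, which can be derived from the compact embedding $H^1_\alpha(\Omega) \hookrightarrow L^2(\Omega)$ available on the interval $[\tau, T]$ (the same compactness invoked in the proof of Lemma \ref{lemma4} to pass from weak $L^2$ convergence to strong $L^2(0,T;L^2)$ convergence of $\varphi(\cdot;\eta_n)$). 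With this uniformity in hand, Sion's theorem produces $V^+ = V^-$.

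The remaining conclusions follow readily. The map $h \mapsto \sup_{\beta \in \cB}\widetilde J(\beta, h)$ is a supremum of weak-$*$ continuous affine functions of $h$, hence weak-$*$ lower semi-continuous on the weak-$*$ compact set $\cH(\hat{\cB})$, so its infimum is attained at some $h^* \in \cH(\hat{\cB})$, which proves that \textbf{(SP4)} is solvable. If now $\beta^*$ is any solution to \textbf{(ROP)}, Corollary \ref{remark1} shows that $\beta^*$ also solves \textbf{(SP3)}, so that $V^- = \inf_{h \in \cH(\hat{\cB})} \widetilde J(\beta^*, h)$. Taking $E = \cH(\hat{\cB})$, $F = \cB$, $y = h$, $x = \beta$, $f(y, x) = \widetilde J(\beta, h)$, Proposition \ref{proposition1} applied with $\bar y = h^*$ and $\bar x = \beta^*$ (together with $V^+ = V^-$) yields that $(h^*, \beta^*)$ is a Nash equilibrium, and the inequality $\widetilde J(\beta, h^*) \leq \widetilde J(\beta^*, h^*)$ from Definition \ref{Nash equilibrium} is exactly \eqref{1-31}.
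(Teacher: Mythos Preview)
Your proposal is correct and follows the same overall architecture as the paper: establish the minimax equality $V^+=V^-$ using the convexity of $\cB$ and $\cH(\hat{\cB})$, the weak-$*$ compactness of both (your remark that (H1) and (H3) already force $\|h\|=1$ is the content of the paper's Lemma \ref{lemma7}), and the separate affinity and weak-$*$ continuity of $\widetilde J$; then read off the existence of $h^*$ and the identity \eqref{1-31} from Proposition \ref{proposition1}.

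The one genuine difference is the tool used for the minimax equality. You invoke Sion's theorem as a black box, whereas the paper proves $V^+\le V^-$ directly via a Kakutani-type fixed-point argument (Zeidler, Proposition 9.9): for each $\varepsilon>0$ one defines a set-valued map $T$ on $\cB\times\cH(\hat{\cB})$ whose fixed point yields $V^+-\varepsilon<\widetilde J(\beta,h)<V^-+\varepsilon$. The two routes need exactly the same structural ingredients, so neither buys more generality here; Sion's theorem is cleaner to cite, while the fixed-point route is more self-contained. It is also worth noting that the paper simply \emph{asserts} the weak-$*$ continuity of $\beta\mapsto\widetilde J(\beta,h)$, whereas you actually justify it via the relative $L^1$-compactness of $\{G(\eta):\eta\in\hat{\cB}\}$ (which indeed follows from the weak-to-strong continuity argument used in Lemma \ref{lemma4}); this extra care is warranted and your argument for it is sound.
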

In order to prove Proposition \ref{Neumann}, we need the following results.
\begin{lemma}\label{lemma7}
Let  $\cH(\hat{\cB})$ be defined as in (\ref{bz4}). Then,  $\cH(\hat{\cB})$ is weakly star compact in $C_b(\hat{\cB})^*$.
\end{lemma}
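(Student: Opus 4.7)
The plan is to invoke the Banach--Alaoglu theorem and then verify that $\cH(\hat{\cB})$ is weak-star closed in the unit ball of $C_b(\hat{\cB})^*$, so that it is a weak-star closed subset of a weak-star compact set and hence itself weak-star compact. Condition (H2) tells us immediately that $\cH(\hat{\cB})$ sits inside the closed unit ball of $C_b(\hat{\cB})^*$, which is weak-star compact by Banach--Alaoglu (applied to the normed space $C_b(\hat{\cB})$). So the only real work is the closedness step.

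For the closedness, I would take an arbitrary net $\{h_\gamma\}\subset\cH(\hat{\cB})$ converging weak star to some $h\in C_b(\hat{\cB})^*$ and check that $h$ satisfies (H1)--(H3). Property (H1) passes to the limit trivially: for any $f\in C_b(\hat{\cB})$ with $f\geq 0$, one has $\langle h_\gamma,f\rangle_{C_b(\hat{\cB})^*,C_b(\hat{\cB})}\geq 0$ for all $\gamma$, and weak-star convergence applied to the test element $f$ gives $\langle h,f\rangle_{C_b(\hat{\cB})^*,C_b(\hat{\cB})}\geq 0$. Property (H3) is just weak-star convergence evaluated at the bounded continuous test function $1\in C_b(\hat{\cB})$: $\langle h,1\rangle_{C_b(\hat{\cB})^*,C_b(\hat{\cB})}=\lim_\gamma \langle h_\gamma,1\rangle_{C_b(\hat{\cB})^*,C_b(\hat{\cB})}=1$.

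Property (H2) is the step I expect to require the most care, since the dual norm is only weak-star lower semicontinuous, not continuous. Lower semicontinuity yields $\|h\|_{C_b(\hat{\cB})^*}\leq \liminf_\gamma \|h_\gamma\|_{C_b(\hat{\cB})^*}=1$. For the reverse inequality I would use $\|1\|_{C_b(\hat{\cB})}=1$ together with (H3) already proved for the limit:
\begin{equation*}
\|h\|_{C_b(\hat{\cB})^*}\geq \frac{|\langle h,1\rangle_{C_b(\hat{\cB})^*,C_b(\hat{\cB})}|}{\|1\|_{C_b(\hat{\cB})}}=1.
\end{equation*}
Combining the two inequalities gives $\|h\|_{C_b(\hat{\cB})^*}=1$, i.e.\ (H2).

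Having verified all three conditions, $h\in\cH(\hat{\cB})$, so $\cH(\hat{\cB})$ is weak-star closed. Intersecting with the weak-star compact closed unit ball from Banach--Alaoglu, we conclude that $\cH(\hat{\cB})$ is weak-star compact in $C_b(\hat{\cB})^*$. The main subtle point is exactly the interplay between (H2) and (H3) at the limit: property (H3) supplies the missing lower bound on the norm that pure weak-star lower semicontinuity cannot provide on its own. No compactness of $\hat{\cB}$ itself is needed, which is important because $\hat{\cB}$ is only a bounded set in the infinite-dimensional space $L^2(\Omega)$.
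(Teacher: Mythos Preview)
Your proof is correct and follows essentially the same approach as the paper: verify weak-star closedness of $\cH(\hat{\cB})$ by checking (H1)--(H3) for a limit, with the key observation that (H3) furnishes the lower bound $\|h\|_{C_b(\hat{\cB})^*}\geq 1$ via the test function $1$, and then invoke Banach--Alaoglu. Your use of nets rather than sequences is in fact slightly more careful than the paper, since $C_b(\hat{\cB})$ need not be separable and so the weak-star topology on the unit ball of its dual need not be metrizable.
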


\begin{proof}
We claim that  $\cH(\hat{\cB})$ is weakly star  closed. In fact, assume that
there is $\{h_n\}\subseteq\cH(\hat{\cB})$ so that
$$
h_n\rightarrow \bar h \text{ weakly star in }  C_b(\hat{\cB})^*.
$$
It suffices to prove that $\bar h \in \cH(\hat{\cB})$.
Conditions \eqref{nonnegative} and \eqref{attained} are easy to be  verified for  $\bar h$. It remains to verify \eqref{norm}. Notice that
$$
\|\bar h\|_{C_b(\hat{\cB})^*}=\sup\limits_{\|f\|\leq 1}\langle \bar h, f\rangle
=\sup\limits_{\|f\|\leq 1}\lim\limits_{n\rightarrow\infty}\langle  h_n, f\rangle
\leq \sup\limits_{\|f\|\leq 1}\varlimsup\limits_{n\rightarrow\infty}\|h_n\|\| f\|\leq  \sup\limits_{\|f\|\leq 1}\|f\|=1.
$$
On the other hand, by \eqref{attained}, we have
$$
1=\lim_{n\rightarrow\infty}\langle h_n, 1\rangle_{C_b(\hat{\cB})^*, C_b(\hat{\cB})}=\langle \bar h,\, 1\rangle_{C_b(\hat{\cB})^*,C_b(\hat{\cB})}\leq \|\bar h\|_{C_b(\hat{\cB})^*}\| 1\|_{C_b(\hat{\cB})}=\|\bar h\|_{C_b(\hat{\cB})^*},
$$
which shows that \eqref{norm} holds. 

On the other hand, it follows from the  Banach-Alaoglu-Bourbaki theorem that  the closed unit ball is  weakly star compact in $C_b(\hat{\cB})^*$. Therefore, $\cH(\hat{\cB})$ is weakly star compact in $C_b(\hat{\cB})^*$. The proof is completed.
\end{proof}

\begin{lemma}[{\cite{zeidler2013nonlinear}, Proposition 9.9}]\label{fix}
A mapping $T:K\rightarrow2^K$, where $K\subseteq X$, will have a fixed point if the following conditions are satisfied:
\begin{itemize}
  \item [(1)] $X$ is a locally convex space and the set $K$ is nonempty, compact and convex;
  \item [(2)] The set $T(x)$ is nonempty and convex for all $x\in K$, and the preimages $T^{-1}(y)$ are relatively open with respect to $K$ for all $y\in K$.
\end{itemize}
\end{lemma}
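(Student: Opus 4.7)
The plan is to deduce all three assertions from a single Nash equilibrium of the two-person zero-sum game with payoff $\widetilde J$ on $\cB\times\cH(\hat{\cB})$, produced by Lemma \ref{fix}. Once a Nash equilibrium $(\hat\beta,\hat h)$ is in hand, Proposition \ref{proposition1} immediately delivers $V^+=V^-$ and shows that $\hat h$ is a solution of \textbf{(SP4)}, while the saddle-point identity \eqref{1-31} for an arbitrary pair of optimizers will follow from a short sandwich argument.

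To invoke Lemma \ref{fix}, I would work in the locally convex space $X:=L^\infty(\Omega_1)\times C_b(\hat{\cB})^*$ equipped with the product weak-star topology, and take $K:=\cB\times\cH(\hat{\cB})$. Convexity of $K$ is immediate from the definitions \eqref{relax-location} and \eqref{bz4}; weak-star compactness of $\cB$ follows from the Banach--Alaoglu theorem together with the observation that the pointwise bounds $0\leq\beta\leq 1$ and the integral constraint $\int_{\Omega_1}\beta\,dx=\lambda|\Omega_1|$ are weak-star closed; and weak-star compactness of $\cH(\hat{\cB})$ is exactly Lemma \ref{lemma7}. I would then define the best-response multifunction $T:K\to 2^K$ by
\begin{equation*}
T(\beta,h):=\Bigl\{\beta'\in\cB:\widetilde J(\beta',h)=\sup_{\tilde\beta\in\cB}\widetilde J(\tilde\beta,h)\Bigr\}\times\Bigl\{h'\in\cH(\hat{\cB}):\widetilde J(\beta,h')=\inf_{\tilde h\in\cH(\hat{\cB})}\widetilde J(\beta,\tilde h)\Bigr\}.
\end{equation*}
Since $J(\beta,\eta)$ is affine in $\beta$ through the single term $\tfrac12\langle\beta,G(\eta)\rangle$, the map $\beta\mapsto\widetilde J(\beta,h)$ is affine; and by construction $h\mapsto\widetilde J(\beta,h)$ is linear in $h$. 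Combined with the compactness of each factor of $K$, this yields nonemptiness and convexity of both factors of $T(\beta,h)$ for every $(\beta,h)\in K$.

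The main obstacle is verifying condition (2) of Lemma \ref{fix}, that each preimage $T^{-1}(\beta',h')$ is relatively open in $K$. Equivalently, I must show that the set of $(\beta,h)$ for which either $\beta'$ fails to maximize $\widetilde J(\cdot,h)$ or $h'$ fails to minimize $\widetilde J(\beta,\cdot)$ is relatively weak-star closed. Passing to nets inside this set, I would exploit the weak-star continuity of $\widetilde J$ in each argument separately --- for fixed $h$, this reduces to showing that $\beta\mapsto h\bigl(\langle\beta,G(\cdot)\rangle\bigr)$ is weak-star continuous, which in turn rests on the compactness of the range $\{G(\eta):\eta\in\hat{\cB}\}$ in $L^1(\Omega_1)$ furnished by the smoothing effect of the semigroup generated by $A$, while for fixed $\beta$ the continuity in $h$ is tautological from the definition of the dual pairing. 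This yields a witness in the limit and closes the argument, producing a fixed point $(\hat\beta,\hat h)\in T(\hat\beta,\hat h)$, which by construction satisfies the two inequalities of Definition \ref{Nash equilibrium}.

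Finally, to derive \eqref{1-31} for arbitrary $\beta^*$ solving \textbf{(ROP)} (equivalently \textbf{(SP3)} by Corollary \ref{remark1}) and $h^*$ solving \textbf{(SP4)}, I would chain
\begin{equation*}
V^-=\inf_{h\in\cH(\hat{\cB})}\widetilde J(\beta^*,h)\leq\widetilde J(\beta^*,h^*)\leq\sup_{\beta\in\cB}\widetilde J(\beta,h^*)=V^+,
\end{equation*}
and invoke $V^+=V^-$ to force equality throughout; the rightmost equality is precisely \eqref{1-31}.
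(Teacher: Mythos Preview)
Your proposal does not address the stated Lemma~\ref{fix} at all. That lemma is the Fan--Browder fixed point theorem, quoted verbatim from \cite{zeidler2013nonlinear}; the paper does not prove it and merely invokes it. What you have written is instead an attempted proof of Proposition~\ref{Neumann}, which \emph{uses} Lemma~\ref{fix}. So at the level of matching statement to proof, there is a complete mismatch.

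Even read as a proof of Proposition~\ref{Neumann}, your approach has a genuine gap. You take $T$ to be the best-response correspondence and then try to verify that $T^{-1}(\beta',h')$ is relatively open. But for a continuous payoff the best-response preimages are \emph{closed}, not open: the condition ``$\beta'$ maximizes $\widetilde J(\cdot,h)$'' is the intersection over $\tilde\beta\in\cB$ of the closed sets $\{h:\widetilde J(\beta',h)\geq\widetilde J(\tilde\beta,h)\}$, and likewise for the $h'$-factor. Your own argument in fact establishes closedness of the complement's complement, not openness of the preimage. The best-response map has closed graph and would be amenable to Kakutani's theorem, but Lemma~\ref{fix} as stated is the Fan--Browder version, which requires open preimages, and your $T$ does not satisfy it.

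The paper avoids this obstruction by a standard $\varepsilon$-trick: for each $\varepsilon>0$ it defines
\[
T(\beta,h)=\bigl\{(\hat\beta,\hat h):\widetilde J(\beta,\hat h)<V^-+\varepsilon,\ \widetilde J(\hat\beta,h)>V^+-\varepsilon\bigr\},
\]
so that the preimages are cut out by \emph{strict} inequalities and are therefore weak-star open by separate continuity of $\widetilde J$. A fixed point then gives $V^+-\varepsilon<\widetilde J(\beta,h)<V^-+\varepsilon$, and letting $\varepsilon\to0$ yields $V^+\leq V^-$. Your final chain for \eqref{1-31} is fine and matches the paper's use of Proposition~\ref{proposition1}, but the core fixed-point step needs to be rebuilt along these lines.
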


Now, we are ready to prove Proposition \ref{Neumann}.

\begin{proof}[{Proof of Proposition \ref{Neumann}}] We divide the proof into two steps.

{\it Step 1.} The inequality $V^-\leq V^+$ follows immediately from
$$
\inf\limits_{y\in E} f(y,x)\leq \inf\limits_{y\in E}\sup\limits_{\hat x\in F} f(y,\hat x)   \text{ for any }x\in F.
$$

On the contrary, we will demonstrate the inequality $V^- \geq V^+$ using Lemma \ref{fix}.
To accomplish this, we select a fixed value of $\varepsilon > 0$ and define a set-valued mapping $T:\cB\times \cH(\hat{\cB})\mapsto 2^{\cH\times\cH(\hat{\cB})}$ as follows:
\begin{equation*}
T(\beta,h)=\left\{(\hat\beta,\hat h)\in \cB\times \cH(\hat{\cB})\Bigm|\widetilde J(\beta,\hat h)<V^-+\varepsilon,~\widetilde J( \hat\beta,h)>V^+-\varepsilon\right\}.
\end{equation*}

(1) It is evident from the definitions of $V^+$ and $V^-$ that $T(\beta, h)\neq\emptyset$.

(2) By examining the definitions of $\widetilde J(\cdot,\cdot)$ in \eqref{cost-function1} and $J(\cdot,\cdot)$ in \eqref{cost-function}, we observe that $\widetilde J(\beta,\cdot)$ is a linear function of $h\in \cH(\hat{\cB})$, and $\widetilde J(\cdot, h)$ is convex with respect to $\beta\in\cB$. Consequently, $T(\beta, )$ is a convex set.

(3) It is worth noting that $\cB$ is both convex and compact in $L^\infty(\Omega)$ under the weak star topology. Similarly, from Lemma \ref{lemma7}, we ascertain that $\cH(\hat{\cB})$ is convex and compact in $C_b(\hat{\cB})^*$ under the weak star topology.

(4) Notice that
 \begin{equation*}
 \begin{split}
 T^{-1}(\hat\beta,\hat h)&=\left\{(\beta,h)\in \cB\times \cH(\hat{\cB})\Bigm|\widetilde J(\beta,\hat h)<V^-+\varepsilon,~\widetilde J( \hat\beta,h)>V^+-\varepsilon\right\}\\
 &=\left\{\beta\in \cB\Bigm|\widetilde J(\beta,\hat h)<V^-+\varepsilon\right\}\times
 \left\{h\in \cH(\hat{\cB})\Bigm|\widetilde J(\hat\beta,h)>V^+-\varepsilon\right\}.
 \end{split}
 \end{equation*}
Because of the weakly star continuity of $\widetilde J(\beta,\cdot)$ and  $\widetilde J(\cdot, h)$, the  sets
$$
\left\{\beta\in \cB\Bigm|\widetilde J(\beta,\hat h)<V^-+\varepsilon\right\},
\quad
\left\{h\in \cH(\hat{\cB})\Bigm|\widetilde J(\hat\beta,h)>V^+-\varepsilon\right\}
$$
are weakly star open with respect to $\cB$ and $\cH(\hat{\cB})$, respectively. Therefore, $T^{-1}(\hat\beta,\hat h)$ is weakly star relatively open in
$\cB\times \cH(\hat{\cB})$.

Thanks to (1)-(4) and from the  Lemma \ref{fix}, there is a fixed point of $T$, \text{i.e.}, there exists $(\beta,h)\in \cB\times \cH(\hat{\cB})$ with $(\beta,h)\in T(\beta,h)$. Thus
$$
V^+-\varepsilon<\widetilde J(\beta,h)<V^-+\varepsilon.
$$
Since $\varepsilon$ is arbitrary, $V^+\leq V^-$. Thus $V^+= V^-$.

{\it Step 2.} Due to the weak star compactness of $\cH(\hat{\cB})$ in $C_b(\hat{\cB})^*$, as established in Lemma \ref{lemma7}, the weak star continuity of $\widetilde{J}$ implies that at least one solution $h^*\in\cH(\hat{\cB})$ exists for problem {\bf (SP4)} (refer to Proposition 8.2 of \cite{aubin2002optima}).

Moreover, when $\beta^*$ satisfies {\bf (SP2)} according to Lemma \ref{lemma4}, it also satisfies {\bf (SP3)} due to their equivalence (see \eqref{equivalence}). By applying Proposition \ref{proposition1}, we can ascertain that $(h^*,\beta^*)$ constitutes a Nash equilibrium, thereby establishing the validity of the conclusion \eqref{1-31}. This completes the proof.
\end{proof}

Based on the above results, we are able to  prove Theorem \ref{main-theorem} by  \eqref{1-31}  in  the next section.

    \section{Proof of Theorem \ref{main-theorem}}
  In order to establish Theorem \ref{main-theorem}, we require several preliminary results. Our initial objective is to quantify the extent of real analyticity exhibited by the function $G(\cdot)$, as defined in (\ref{definition-of-G}).
\begin{lemma}\label{lemma6}
Let $\alpha\in(0,2)$, $\tau\in(0,T)$ and the operator $G$ be defined by \eqref{definition-of-G}. Then, the  following assertions hold:
\begin{enumerate}[(i)]
\item There are $C_1=C_1(\alpha,\O,\O_1,T,a,\tau)\geq1$, $\rho=\rho(\alpha,\O,\O_1)\in(0,1]$ such  that
\begin{equation}\label{key-inequality}
\left\vert\bigr  (\partial ^\gamma_x G(\eta)\bigl)(x)\right\vert\leq TC_1^2\rho^{-\gamma}\gamma!  \|\eta\|^2 \text{ for any } \eta\in L^2(\Omega),\, x\in\overline{\Omega}_1,\,\gamma\in \mathbb{N}.
\end{equation}
As a result,  $G(\eta)$ is analytic in $\Omega_1$ for any $\eta\in L^2(\Omega)$. In addition,
\begin{equation*}
G(\eta)\in C(\overline{\Omega}_1)
\end{equation*}
is uniformly Lipschitz with respect to $\eta\in\hat{\cB}$. Thus, for any fixed $\eta\in L^2(\Omega)$ and $x\in \overline{\Omega}_1$, $(G(\eta))(x)$ is well-defined.

\item Let $x\in\overline{\Omega}_1$ be fixed and the  functional $\bar{h}_x:\hat{\cB}\mapsto \mathbb{R}$ be defined by
\begin{equation*}
\bar{h}_x(\eta):=\bigr(G(\eta)\bigl)(x) \text{ for any }\eta\in \hat{\cB}.
\end{equation*}
Then,  $\bar{h}_x$ is uniformly Lipschitz with respect to $x\in\overline{\Omega}_1$ and hence $\bar{h}_x\in C_b(\hat{\cB})$.

\item  Let  $h\in \cH(\hat{\cB})$. The function $\overline{\Omega}_1\mapsto\mathbb{R}$ defined by
\begin{equation*}\label{definition-of-h}
H_h(x):=\langle h,\, \bar{h}_x\rangle_{C_b(\hat{\cB})^*, C_b(\hat{\cB})}=\left\langle h,\,\bigr(G(\cdot)\bigl)(x)\right\rangle_{C_b(\hat{\cB})^*, C_b(\hat{\cB})}\quad\text{for any } x\in \overline{\Omega}_1
\end{equation*}
 is well-defined.
Furthermore,
\begin{equation}\label{1.36}
H_h\in C(\overline{\Omega}_1).
\end{equation}

\end{enumerate}
\end{lemma}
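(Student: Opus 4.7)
The three statements are naturally ordered: part (i) is the analytic workhorse, and parts (ii)--(iii) are soft consequences of (i). My plan is to establish (i) via $x$-derivative estimates on the adjoint solution $\varphi(\cdot,t;\eta)$ on the compact set $\overline{\Omega}_1 \subset (0,1)$, then unpack the Leibniz formula for $|\varphi|^2$, then deduce (ii) from the $\gamma=0,1$ cases of the key inequality, and finally deduce (iii) from the Lipschitz property of $x \mapsto \bar h_x$ together with the definition of $\mathcal H(\hat{\cB})$.

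\textbf{Part (i).} For $x \in \overline{\Omega}_1$ and $t \in [\tau,T]$, the adjoint equation \eqref{adjoint-state} is uniformly parabolic with analytic coefficients (the coefficient $x^\alpha$ is analytic and non-degenerate on $\overline{\Omega}_1$, and $a(\cdot)$ is analytic by hypothesis). The positive lower time bound $\tau > 0$ means $\varphi(\cdot,t;\eta)$ has been smoothed by the semigroup, so on $[\tau,T] \times \overline{\Omega}_1$ one may invoke the interior analyticity estimates employed in \cite{liu2023observability} to obtain constants $C_1 \geq 1$ and $\rho \in (0,1]$, depending only on $\alpha,\Omega,\Omega_1,T,a,\tau$, such that
\begin{equation*}
\left|\partial_x^k \varphi(x,t;\eta)\right| \leq C_1 \rho^{-k} k!\, \|\eta\|, \qquad x \in \overline{\Omega}_1,\ t \in [\tau,T],\ k \in \mathbb{N}.
\end{equation*}
Differentiating $G(\eta)(x) = \int_\tau^T |\varphi(x,t;\eta)|^2\,dt$ under the integral and applying the Leibniz rule gives
\begin{equation*}
\partial_x^\gamma G(\eta)(x) = \int_\tau^T \sum_{k=0}^{\gamma} \binom{\gamma}{k} \partial_x^k \varphi(x,t;\eta) \, \partial_x^{\gamma-k}\varphi(x,t;\eta) \, dt.
\end{equation*}
Plugging in the derivative bound and using $\sum_{k=0}^{\gamma} \binom{\gamma}{k} k!(\gamma-k)! = (\gamma+1)!\leq 2\,\gamma!$ after appropriate grouping (or simpler majorization yielding a constant absorbable into $C_1^2$) gives the target estimate \eqref{key-inequality}. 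Real analyticity on $\Omega_1$ is immediate from the geometric growth $\rho^{-\gamma}\gamma!$; Lipschitz continuity of $G(\eta) \in C(\overline{\Omega}_1)$ with constant uniform over $\eta \in \hat{\cB}$ (where $\|\eta\| \leq \delta_0$) follows from the $\gamma = 1$ case, which bounds $\|\partial_x G(\eta)\|_{L^\infty(\overline{\Omega}_1)}$ by $T C_1^2 \rho^{-1}\delta_0^2$.

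\textbf{Part (ii).} For fixed $x \in \overline{\Omega}_1$, the map $\eta \mapsto \bar h_x(\eta) = G(\eta)(x)$ is continuous on $\hat{\cB}$: if $\eta_n \to \eta$ in $L^2(\Omega)$ (weakly suffices together with compact embedding of $H^1_\alpha$ into $L^2$, as used in Lemma \ref{lemma4}), then $\varphi(\cdot,\cdot;\eta_n) \to \varphi(\cdot,\cdot;\eta)$ strongly in $L^2((\tau,T);L^2(\Omega))$, and by the interior regularity bounds above one even has pointwise convergence of $\varphi(x,t;\eta_n)^2$ on $\overline{\Omega}_1\times[\tau,T]$ dominated by the uniform bound from (i). Boundedness $|\bar h_x(\eta)| \leq TC_1^2\delta_0^2$ is the $\gamma = 0$ case of \eqref{key-inequality}. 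For the Lipschitz-in-$x$ property, given $\eta \in \hat{\cB}$ and $x,x' \in \overline{\Omega}_1$, the mean value theorem together with the $\gamma = 1$ estimate give
\begin{equation*}
|\bar h_x(\eta) - \bar h_{x'}(\eta)| = |G(\eta)(x) - G(\eta)(x')| \leq TC_1^2 \rho^{-1} \delta_0^2\, |x - x'|,
\end{equation*}
with the Lipschitz constant independent of $\eta \in \hat{\cB}$. Hence $\bar h_x \in C_b(\hat{\cB})$ and $x \mapsto \bar h_x$ is Lipschitz from $\overline{\Omega}_1$ into $C_b(\hat{\cB})$.

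\textbf{Part (iii).} Well-definedness of $H_h(x)$ is immediate from $\bar h_x \in C_b(\hat{\cB})$ and $h \in \cH(\hat{\cB}) \subset C_b(\hat{\cB})^*$. For continuity, use linearity of $h$ and the operator norm bound $\|h\|_{C_b(\hat{\cB})^*} = 1$ from \eqref{norm}: for any $x,x' \in \overline{\Omega}_1$,
\begin{equation*}
|H_h(x) - H_h(x')| = |\langle h, \bar h_x - \bar h_{x'}\rangle_{C_b(\hat{\cB})^*, C_b(\hat{\cB})}| \leq \|\bar h_x - \bar h_{x'}\|_{C_b(\hat{\cB})} \leq TC_1^2\rho^{-1}\delta_0^2 |x-x'|,
\end{equation*}
where the last inequality is the uniform-in-$\eta$ Lipschitz bound from (ii). This proves \eqref{1.36} (in fact with a Lipschitz constant).

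\textbf{Main obstacle.} The technical heart is the derivative estimate for $\varphi$ in part (i). The degeneracy of $A$ at $x=0$ prevents a naive global analyticity argument; the trick is that $\Omega_1$ stays away from $0$ so the equation is locally uniformly parabolic there, but one still must transfer analyticity of the coefficients into Cauchy-type bounds on $\partial_x^k \varphi$ that are uniform in $t \in [\tau,T]$ and depend on $\|\eta\|$ only through a first power (this is why $|\varphi|^2$ produces $\|\eta\|^2$ in \eqref{key-inequality}). This is precisely where the techniques imported from \cite{liu2023observability} are needed, and this is the step I would spend the most care on; everything else is bookkeeping.
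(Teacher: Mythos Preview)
Your proposal is correct and follows essentially the same architecture as the paper: derive Cauchy-type derivative bounds on $\varphi$ from the analyticity results of \cite{liu2023observability}, push them through the Leibniz formula to obtain \eqref{key-inequality}, and then read off (ii) and (iii). Two small points of comparison are worth recording.

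First, in part (i) the paper is more explicit about how the lower-order term $a(x)$ and the backward time direction are absorbed: it sets $\hat u(t)=e^{-T\|a\|_{L^\infty}t}\varphi(T-t;\eta)$ so that $\hat u$ satisfies a forward equation to which the estimate of \cite{liu2023observability} applies verbatim, and then unwinds the change of variables. Your appeal to interior analyticity on $\overline{\Omega}_1\times[\tau,T]$ is morally the same, but you should expect to carry out this reduction rather than cite it. Also, your combinatorial remark ``$\sum_k\binom{\gamma}{k}k!(\gamma-k)!=(\gamma+1)!\leq 2\gamma!$'' is false as written; the correct observation is that the extra factor $(\gamma+1)$ is harmless because it can be absorbed into a slightly smaller $\rho$ (the paper simply suppresses this).

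Second, in part (ii) the paper and you prove slightly different things. The statement ``$\bar h_x$ is uniformly Lipschitz with respect to $x$'' is meant as: $\eta\mapsto\bar h_x(\eta)$ is Lipschitz on $\hat{\cB}$ with a constant independent of $x$. The paper obtains this directly from the factorization
\[
\bigl|[G(\hat\eta)-G(\eta)](x)\bigr|^2\leq 2\bigl[(G(\hat\eta))(x)+(G(\eta))(x)\bigr]\cdot (G(\hat\eta-\eta))(x),
\]
together with the $\gamma=0$ case of \eqref{key-inequality}. You instead argue mere continuity in $\eta$ (via convergence of $\varphi(\cdot;\eta_n)$) plus boundedness, and separately prove Lipschitz continuity of $x\mapsto\bar h_x$ into $C_b(\hat{\cB})$. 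Your route still yields $\bar h_x\in C_b(\hat{\cB})$ and everything needed for (iii), but it does not literally establish the Lipschitz-in-$\eta$ claim as stated; the paper's one-line algebraic estimate is both shorter and gives the stronger conclusion.
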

Before giving the proof of Lemma \ref{lemma6}, we recall a result on analyticity of solutions to degenerate parabolic evolutions and applications (see \cite{liu2023observability}).
\begin{lemma}
\label{lemma-analytic}
Set $\alpha\in(0,2)$. Let $\omega_0$ be a subdomain of $\O$ with $0\not\in \bar{\omega}_0$. Then there are positive constants $C=C(\alpha,\O,\omega_0)\geq1$ and $\rho=\rho(\alpha,\O,\omega_0)$, $0<\rho\leq1$, such that when $x\in\omega_0$, $\forall\,0\leq s< t$, $\theta\in\N$ and $\gamma\in\N$, we have
\begin{equation*}
|\partial_x^\gamma\partial_t^\theta v(x,t)|\leq \dfrac{Ce^{\frac{C}{t-s}}\theta!\gamma!}{\rho^\gamma((t-s)/2)^\theta}\|v(\cdot,s)\|,
\end{equation*}
where $v$ solves the following forward equation:
\begin{equation*}
\left\{
\begin{array}{ll}
v_t(x,t)-Av(x,t) = 0, & \left( x ,t\right) \in \O\times(0,T),   \\[2mm]
v(1,t)=BC_\alpha\big(u(\cdot,t)\big) = 0, & t\in \left(0,T\right), \\[2mm]
v\left(x, 0\right) =v_{0}(x), &  x\in \O\,,
\end{array}
\right.
\end{equation*}
where $A$ is defined in (\ref{operator}).
\end{lemma}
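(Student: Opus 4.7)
Part (i) is the core technical statement; (ii) and (iii) will be straightforward consequences once the derivative bound on $G$ is established with constants uniform in $\eta\in\hat{\cB}$. For (i) my plan is to first prove a pointwise factorial bound for spatial derivatives of $\varphi$ itself,
\begin{equation*}
|\partial_x^\gamma \varphi(t;\eta)(x)|\le \widetilde C\,\gamma!/\rho^\gamma\,\|\eta\|,\qquad x\in\overline{\Omega}_1,\ t\in[\tau,T],\ \gamma\in\mathbb{N},
\end{equation*}
and then assemble \eqref{key-inequality} via the Leibniz rule. Reversing time by $\psi(x,s):=\varphi(x,T-s)$ recasts \eqref{adjoint-state} as the forward equation $\psi_s-A\psi=-a(x)\psi$ with $\psi(\cdot,0)=\eta$. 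Since $\overline{\Omega}_1\Subset \Omega\setminus\{0\}$, Lemma \ref{lemma-analytic} supplies the required factorial bound on $e^{sA}\eta$ on a slightly larger neighborhood of $\overline{\Omega}_1$, and the analyticity of $a$ on $\overline{\Omega}_1$ yields $|\partial_x^\gamma a(x)|\le M\gamma!/\rho_1^\gamma$ for suitable constants $M,\rho_1$. To absorb the lower-order term, I would expand $\psi$ by Duhamel,
\begin{equation*}
\psi(s)=e^{sA}\eta-\int_0^s e^{(s-r)A}\bigl(a(\cdot)\psi(r)\bigr)\,dr,
\end{equation*}
iterate, and apply Lemma \ref{lemma-analytic} together with the Leibniz expansion of $\partial_x^\gamma(a\psi)$; the combinatorial bookkeeping yields a convergent geometric series after a mild shrinking of $\rho$ and enlargement of $\widetilde C$. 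The factor $e^{C/(s-r)}$ in Lemma \ref{lemma-analytic} is kept controlled by restricting attention to $s\in[0,T-\tau]$ and partitioning each free-propagation step away from the singular time. This is essentially the strategy inherited from \cite{liu2023observability}.

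Granted the pointwise bound on $\varphi$, $\partial_x^\gamma G(\eta)(x)=\int_\tau^T\partial_x^\gamma(|\varphi|^2)\,dt$ is controlled via the Leibniz rule
\begin{equation*}
\partial_x^\gamma|\varphi|^2=\sum_{k=0}^\gamma\binom{\gamma}{k}\partial_x^k\varphi\cdot\partial_x^{\gamma-k}\varphi,
\end{equation*}
the elementary identity $\sum_{k=0}^\gamma\binom{\gamma}{k}k!(\gamma-k)!=(\gamma+1)!$, and one final shrinking of $\rho$ to absorb $(\gamma+1)$ into $\rho^{-\gamma}$. Integrating in $t$ over $[\tau,T]$ delivers \eqref{key-inequality}. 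Analyticity of $G(\eta)$ in $\Omega_1$ is then immediate from the Taylor remainder estimate. For the uniform Lipschitz dependence of $G(\eta)\in C(\overline{\Omega}_1)$ on $\eta\in\hat{\cB}$, polarization gives $G(\eta_1)(x)-G(\eta_2)(x)=\int_\tau^T(\varphi_1+\varphi_2)(\varphi_1-\varphi_2)\,dt$; applying the $\gamma=0$ pointwise bound to $\eta_1+\eta_2$ (of norm at most $2\delta_0$) and to $\eta_1-\eta_2$ supplies the Lipschitz constant uniformly in $x\in\overline{\Omega}_1$.

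For (ii), the $\gamma=0$ case of \eqref{key-inequality} gives $|\bar{h}_x(\eta)|\le T\widetilde C^2\delta_0^2$, so $\bar{h}_x$ is bounded on $\hat{\cB}$; the Lipschitz property of $\eta\mapsto G(\eta)(x)$ just established, being uniform in $x$, shows $\bar{h}_x$ is Lipschitz continuous in $\eta$ with Lipschitz constant independent of $x$, hence $\bar{h}_x\in C_b(\hat{\cB})$. For (iii), $H_h(x)$ is well-defined because $\bar{h}_x\in C_b(\hat{\cB})$ and $h\in C_b(\hat{\cB})^*$. For the continuity \eqref{1.36}, the $\gamma=1$ case of \eqref{key-inequality} together with $\|\eta\|\le\delta_0$ yields
\begin{equation*}
\sup_{\eta\in\hat{\cB}}|\bar{h}_{x_1}(\eta)-\bar{h}_{x_2}(\eta)|\le T\widetilde C^2\rho^{-1}\delta_0^2\,|x_1-x_2|,
\end{equation*}
so $x\mapsto\bar{h}_x$ is Lipschitz into $C_b(\hat{\cB})$; boundedness of $h$ then gives $|H_h(x_n)-H_h(x)|\le \|h\|_{C_b(\hat{\cB})^*}\|\bar{h}_{x_n}-\bar{h}_x\|_{C_b(\hat{\cB})}\to 0$.

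The principal difficulty lies in the pointwise factorial bound on $\partial_x^\gamma\varphi$, which must cope both with the degeneracy of $A$ at $x=0$ and with the analytic lower-order term $a(x)\varphi$. The degeneracy is circumvented by evaluating only on $\overline{\Omega}_1=[\epsilon,1]$ — this is precisely the reason the admissible set $W$ was defined inside $\Omega_1$ rather than $\Omega$, making Lemma \ref{lemma-analytic} applicable to the free evolution. The potential $a(x)$ is then absorbed by the iterated Duhamel expansion, at the cost of tracking constants carefully and a mild shrinking of the analyticity radius $\rho$.
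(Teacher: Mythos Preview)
Your proposal is not a proof of the stated Lemma~\ref{lemma-analytic}. That lemma --- the pointwise factorial bound for solutions of the free degenerate equation $v_t-Av=0$ --- is not proved in the paper at all; it is quoted from \cite{liu2023observability} and used as a black box. What you have actually written is a proof sketch for Lemma~\ref{lemma6} (the three-part result on $G(\eta)$, $\bar h_x$, $H_h$), which \emph{uses} Lemma~\ref{lemma-analytic} as an input.

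If the intended target was Lemma~\ref{lemma6}, then parts (ii) and (iii) of your sketch match the paper essentially line for line. The only real divergence is in part (i), in how the lower-order term $a(x)\varphi$ is absorbed when passing from Lemma~\ref{lemma-analytic} (stated only for $v_t-Av=0$) to the adjoint equation~\eqref{adjoint-state}. You propose a Duhamel expansion $\psi(s)=e^{sA}\eta-\int_0^s e^{(s-r)A}(a\psi)\,dr$ followed by iteration. The paper instead makes the one-line substitution $\hat u(t)=e^{-T\|a\|_{L^\infty}t}\varphi(T-t;\eta)$, observes that $\hat u$ solves an equation with $A$ replaced by $\bar A=-A+T(\|a\|_{L^\infty}+a)$, and applies the analyticity estimate directly to that modified equation. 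The paper's route is much shorter but tacitly assumes the result of \cite{liu2023observability} already covers $A$ perturbed by a bounded analytic potential, not merely the bare $A$ of Lemma~\ref{lemma-analytic} as written. Your Duhamel route is more self-contained in that respect, but the singular kernel $e^{C/(s-r)}$ makes the naive iterated bound diverge near $r=s$; the ``partitioning each free-propagation step away from the singular time'' you allude to would need genuine work to carry out. Once the pointwise bound on $\partial_x^\gamma\varphi$ is in hand, both arguments are identical: apply Leibniz to $|\varphi|^2$, sum $\sum_k\binom{\gamma}{k}k!(\gamma-k)!$, and integrate in $t$ to reach~\eqref{key-inequality}.
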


Now, we are ready to prove Lemma \ref{lemma6}.
\begin{proof}[{Proof of Lemma \ref{lemma6}}]  

(i).  Consider the following backward equation:
\begin{equation}\label{1.24}
\left\{
\begin{array}{ll}
u_t(x,t)+Au(x,t) = 0, & \left( x ,t\right) \in \O\times(0,T),   \\[2mm]
u(1,t)=BC_\alpha\big(u(\cdot,t)\big) = 0, & t\in \left(0,T\right), \\[2mm]
u\left(x, T\right) =\eta, &  x\in \O\,.
\end{array}
\right.
\end{equation}
Let $\o_0=\O_1$, by Lemma \ref{lemma-analytic}, there are positive constants $C=C(\alpha,\O,\O_1)\geq1$ and $\rho=\rho(\alpha,\O,\O_1)$, $0<\rho\leq1$, such that the solution of (\ref{1.24}) satisfies
\begin{equation}\label{analytic-1}
|\partial_x^\gamma\partial_t^\theta u(x,t)|\leq \dfrac{Ce^{\frac{C}{s-t}}\theta!\gamma!}{\rho^\gamma((s-t)/2)^\theta}\|u(\cdot,s)\|,\,\forall\,\gamma,\theta\in\N,~\text{and}~t<s\leq T.
\end{equation}
In our case, for any $\eta\in L^2(\Omega)$,  consider the solution of adjoint equation: $\varphi(\cdot; \eta)$ (see \eqref{adjoint-state}).
By setting
\begin{equation}\label{1.26.1}
\hat u(t)=e^{-T\|a\|_{L^\infty(\Omega)}t}\varphi(T-t;\eta) \text{ for any }t\in[\tau,T],
\end{equation}
we find that it solves \eqref{1.24} with $A$ replaced by $\bar{A}$ defined as:
 $$
 \bar{A}=-A+T(\|a\|_{L^\infty(\Omega)}+a).
$$
By setting $\theta=0$ and $s=T$ in (\ref{analytic-1}), we obtain
$$
|\partial_x^\gamma u(x,t)|\leq \dfrac{Ce^{\frac{C}{T-t}}\gamma!}{\rho^\gamma}\|\eta\|,\,\forall\,\gamma\in\N,~\text{ for any }~t\in[\tau,T).
$$
This, together with \eqref{1.26.1}, implies  that
 \begin{equation*}
|\partial ^\gamma_x\varphi(x,T-t;\eta)|\leq e^{T\|a\|_{L^\infty(\Omega)}t}\dfrac{Ce^{\frac{C}{T-t}}\gamma!}{\rho^\gamma}\|\eta\|~\text{ for any }~t\in[\tau,T).
\end{equation*}
Replaced $T-t$ by $\bar{t}$, then 
denoted $t$ as $\bar{t}$ again, the above inequality implies  that
 \begin{equation*}\label{1.27}
 \begin{split}
|\partial ^\gamma_x\varphi(x,t;\eta)|&\leq e^{T(T-t)\|a\|_{L^\infty(\Omega)}}\dfrac{Ce^{\frac{C}{t}}\gamma!}{\rho^\gamma}\|\eta\|\\
&\leq e^{T^2\|a\|_{L^\infty(\Omega)}}\dfrac{CC(\tau,T)\gamma!}{\rho^\gamma}\|\eta\|~\text{ for any }~t\in[\tau,T).
 \end{split}
\end{equation*}
Denote by
$$
C_1=e^{T^2\|a\|_{L^\infty(\Omega)}}CC(\tau, T).
$$
We could derive that
 \begin{equation}\label{1.28}
|\partial ^\gamma_x\varphi(x,t;\eta)|\leq C_1\rho^{-\gamma}\gamma! \|\eta\| \text{ for }t\in[\tau,T),\, \eta\in L^2(\Omega).
\end{equation}
Recall the definition of $G(\eta)$ in  \eqref{definition-of-G}.
It holds that
$$
\bigr(\partial ^\gamma_x G(\eta)\bigl)(x)=\int^{T}_\tau\sum\limits_{\gamma_1+\gamma_2=\gamma}\left(\begin{matrix}
\gamma  \\
\gamma_1 
\end{matrix}
\right)\partial ^{\gamma_1}_x \varphi(x,t;\eta)\partial ^{\gamma_2}_x \varphi(x,t;\eta) dt,
$$
which, together with \eqref{1.28}, implies that
$$
\begin{array}{rl}
\left\vert\bigr(\partial ^\gamma_x G(\eta)\bigl)(x)\right\vert\leq&
\int^{T}_\tau\big|\sum\limits_{\gamma_1+\gamma_2=\gamma}\left(\begin{matrix}
\gamma  \\
\gamma_1 
\end{matrix}
\right)\partial ^{\gamma_1}_x \varphi(x,t;\eta)\partial ^{\gamma_2}_x \varphi(x,t;\eta)\big| dt\\
 \leq &T\sup\limits_{t\in[\tau,T)}\sum\limits_{\gamma_1+\gamma_2=\gamma}\left(\begin{matrix}
\gamma  \\
\gamma_1 
\end{matrix}
\right)\left|\partial ^{\gamma_1}_x \varphi(x,t;\eta)\right|\cdot\left|\partial ^{\gamma_2}_x \varphi(x,t;\eta)\right|\\
\leq & T\sum\limits_{\gamma_1+\gamma_2=\gamma}\left(\begin{matrix}
\gamma  \\
\gamma_1 
\end{matrix}
\right)C_1\rho^{-\gamma_1}\gamma_1!C_1\rho^{-\gamma_2}\gamma_2! \|\eta\|^2\\
\leq& TC_1^2\rho^{-\gamma}\gamma!  \|\eta\|^2 \text{ whenever } \eta\in L^2(\Omega),\, x\in\overline{\Omega}_1.
\end{array}
$$
This  advices  \eqref{key-inequality}.
As a result,   $G(\eta)$ is analytic in $\Omega_1$.

Let $x\in\overline{\Omega}_1$. Notice that for any $\hat x,\,x\in\overline{\Omega}_1$ and $\eta\in \hat{\cB}$, there is $\mu\in[0,1]$ so that
$$
\left[G(\eta)\right](\hat x)-\left[G(\eta)\right]( x)=\Bigr\langle \partial_x G(\eta)( x+\mu(\hat x- x)) ,\,\hat x-x\Bigl\rangle_{\mathbb{R}}.
$$
It follows from the uniform boundedness of $\partial_x G(\eta)$ (see \eqref{key-inequality} with $\gamma=1$) that there is  $C$ which is
independent of $\eta\in\hat{\cB}$ such that
\begin{equation}\label{1-45}
\Bigr|\left[G(\eta)\right](\hat x)-\left[G(\eta)\right]( x)\Bigl|
\leq C|\hat x-x|,\qquad
 \forall\,\eta\in \hat{\cB},\,\hat x,\,x\in\overline{\Omega}_1.
\end{equation}
Consequently, $G(\eta)$ exhibits uniform Lipschitz continuity with respect to $\eta\in \hat{\cB}$. Given that $G(\eta)\in C(\overline{\Omega}_1)$ holds for any $\eta\in \hat{\cB}$, we can assert that $(G(\eta))(x)$ is well-defined for all $x\in\overline{\Omega}_1$.

(ii).  Let $x\in\overline{\Omega}_1$ be fixed.
From the definition of $G$, it is derived   from  the  Cauchy-Schwarz inequality that
\begin{equation*}
\begin{split}
\left|\left[G(\hat \eta)-G(\eta)\right](x)\right|^2&=\left[\int^{T}_\tau\left|\varphi(x,s;\hat \eta)+\varphi(x,s;\eta)\right|\cdot \left|\varphi(x,s;\hat \eta)-\varphi(x,s;\eta)\right| ds\right]^2\\
&\leq 2\left[\int^{T}_\tau|\varphi(x,s;\hat \eta)|^2+|\varphi(x,s;\eta)|^2 ds\right]\cdot\int^{T}_\tau|\varphi(x,s;\hat \eta-\eta)|^2 ds\\
&=2\left[\left(G(\hat \eta)\right)(x)+\left(G(\eta)\right)(x)\right]\cdot \left[G(\hat \eta-\eta)\right](x),\quad \forall\, \hat \eta,\eta\in\hat{\cB}.
\end{split}
\end{equation*}
From \eqref{key-inequality} with $\gamma=0$,
  there is $C>0$ such  that
\begin{equation}\label{1.53}
\left|\bar{h}_x(\hat \eta)-\bar{h}_x(\eta)\right|=\left|\left[G(\hat \eta)-G(\eta)\right](x)\right|\le C\|\hat \eta-\eta\|,\quad \forall\, \hat \eta,\eta\in\hat{\cB},
x\in\overline{\Omega}_1,
\end{equation}
which shows that  $G(\cdot)(x)$ is uniformly Lipschitz in  $\hat{\cB}$ and
 $\bar{h}_x\in C_b(\hat{\cB})$.

(iii). Let  $h\in \cH(\hat{\cB})$. Because $\bar{h}_x\in C_b(\hat{\cB})$ for any fixed $x\in\overline{\Omega}_1$,
the function defined by
\begin{equation}\label{definition-of-H}
H_h(x):=h(\bar{h}_x)=h\Bigr(\bigr(G(\cdot)\bigl)(x)\Bigl)\qquad\text{for any } x\in \overline{\Omega}_1
\end{equation}
 is well-defined. Besides, it follows from \eqref{1-45} that
 $$
 \Bigr\|\left[G(\cdot)\right](\hat x)-\left[G(\cdot)\right]( x)\Bigl\|_{C_b(\hat{\cB})}
\leq C|\hat x-x|,
$$
which gives  \eqref{1.36}. The proof is completed.
\end{proof}

\begin{lemma}\label{lemma9} Let $\beta\in\cB$ and $h \in  \cH(\hat{\cB})$. Then, the following equality holds
\begin{equation}\label{1.38}
h\biggr(\int_{\Omega_1}\beta(x)\bigr(G(\cdot)\bigl)(x) dx\biggl)=\int_{\Omega_1}\beta(x)h\Bigr(\bigr(G(\cdot)\bigl)(x)\Bigl) dx.
\end{equation}
\end{lemma}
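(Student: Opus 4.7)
The plan is to read the left-hand side of \eqref{1.38} as the image under the continuous linear functional $h$ of the $C_b(\hat{\cB})$-valued integral $\int_{\Omega_1}\beta(x)\bar h_x\,dx$, and then exploit the fact that bounded linear functionals commute with such Banach-space-valued integrals. To keep the argument elementary, I will reduce this to the trivial identity ``$h$ commutes with finite sums'' via a double limit: first regularize $\beta$ so that $x\mapsto \beta(x)\bar h_x$ becomes a continuous $C_b(\hat{\cB})$-valued map, then approximate the resulting integral by Riemann sums.

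First, mollify $\beta\in L^\infty(\Omega_1;[0,1])$ to produce $\beta_n\in C(\overline{\Omega}_1;[0,1])$ with $\|\beta_n-\beta\|_{L^1(\Omega_1)}\to 0$. For each fixed $n$, partition $\overline{\Omega}_1=[\epsilon,1]$ into $N$ equal subintervals $I_{N,k}$ with sample points $x_{N,k}$ and form the Riemann sum
\begin{equation*}
R_{n,N}:=\sum_{k=1}^N \beta_n(x_{N,k})\,\bar h_{x_{N,k}}\,|I_{N,k}|\ \in\ C_b(\hat{\cB}).
\end{equation*}
The uniform-in-$\eta$ Lipschitz estimate \eqref{1-45}, which exactly says $\|\bar h_{\hat x}-\bar h_x\|_{C_b(\hat{\cB})}\leq C|\hat x-x|$, combined with the uniform continuity of $\beta_n$, guarantees that $R_{n,N}\to\int_{\Omega_1}\beta_n(x)\bar h_x\,dx$ in $C_b(\hat{\cB})$ as $N\to\infty$. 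By linearity of $h$,
\begin{equation*}
h(R_{n,N})=\sum_{k=1}^N \beta_n(x_{N,k})\,h(\bar h_{x_{N,k}})\,|I_{N,k}|=\sum_{k=1}^N \beta_n(x_{N,k})\,H_h(x_{N,k})\,|I_{N,k}|,
\end{equation*}
which converges to $\int_{\Omega_1}\beta_n(x)H_h(x)\,dx$ as $N\to\infty$, because $H_h\in C(\overline{\Omega}_1)$ by \eqref{1.36}. Passing $N\to\infty$ in the two displays and invoking the continuity of $h$ on the left then gives the intermediate identity
\begin{equation*}
h\!\left(\int_{\Omega_1}\beta_n(x)\bar h_x\,dx\right)=\int_{\Omega_1}\beta_n(x)H_h(x)\,dx.
\end{equation*}

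Finally, let $n\to\infty$. Setting $M:=\sup_{x\in\overline{\Omega}_1,\,\eta\in\hat{\cB}}|G(\eta)(x)|$, which is finite by \eqref{key-inequality} with $\gamma=0$ together with the definition \eqref{definition of B} of $\hat{\cB}$, the left-hand side changes by at most $\|h\|_{C_b(\hat{\cB})^*}\cdot M\cdot\|\beta_n-\beta\|_{L^1(\Omega_1)}$ and the right-hand side by at most $\|H_h\|_{C(\overline{\Omega}_1)}\cdot\|\beta_n-\beta\|_{L^1(\Omega_1)}$; both vanish, yielding \eqref{1.38}. There is no deep obstacle in this proof; the only point that requires genuine care is the $C_b(\hat{\cB})$-valued convergence of the Riemann sums $R_{n,N}$, and this rests precisely on the uniform-in-$\eta$ Lipschitz estimate of Lemma \ref{lemma6}(ii), which the analytic work of Lemma \ref{lemma6} has already secured.
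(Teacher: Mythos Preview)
Your proof is correct and follows essentially the same strategy as the paper: approximate the $C_b(\hat{\cB})$-valued integral by finite sums (where the identity is trivial), control the error uniformly in $\eta$ via the Lipschitz estimate of Lemma~\ref{lemma6}, and then pass from a dense subclass of $\beta$'s to general $\beta\in\cB$ by $L^1$ approximation. The only cosmetic difference is that the paper first treats $\beta=\chi_E$ (and hence simple functions) via a partition of $\overline{\Omega}_1$, whereas you first treat continuous $\beta_n$ via Riemann sums; both routes rest on exactly the same ingredients.
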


\begin{proof}
The  proof will be accomplished  through  the following steps.

{\it  Step 1: Show  that \eqref{1.38} with $\beta=\chi_E$ for any measurable set $E\subseteq \Omega_1$. } 

For any given $\varepsilon>0$, we can utilize the uniform Lipschitz continuity of $G(\eta)$, as stated in the first item of Lemma \ref{lemma6}, to deduce the there is $\delta>0$ such that the following inequality holds:
\begin{equation}\label{1-48}
\bigr|\left[G(\eta)\right](\hat x)-\left[G(\eta)\right]( x)\bigl|\leq\varepsilon \quad\text{ for all }\eta\in \hat{\cB},\,\hat x,\,x\in\overline{\Omega}_1,\, |\hat x-x|\leq\delta.
\end{equation}
Notice that
$$
\overline{\Omega}_1\subseteq \bigcup\limits_{x\in \overline{\Omega}_1}\left\{\hat x\in\overline{\Omega}_1\bigm| |\hat x-x|<\delta\right\}.
$$
It follow from the compactness of $\overline{\Omega}_1$ that there is a partition of $\overline{\Omega}_1$, $\{E_{\varepsilon, 1}, E_{\varepsilon, 2},\cdots, E_{\varepsilon, n_\varepsilon}\}$ so that
for any  $j\in\{1,2,\cdots, n_\varepsilon\}$, there  is $x_j\in \overline{\Omega}_1$ satisfying
$$
E_{\varepsilon, j}\subseteq \left\{\hat x\in\overline{\Omega}_1\bigm| |\hat x-x_j|\leq\delta\right\}.
$$
Then, it is derived from \eqref{1-48} that
\begin{equation}\label{1-49}
\left|\int_{\Omega_1}\chi_E(x)\bigr(G(\eta)\bigl)(x) dx-\sum\limits_{j=1}^{n_\varepsilon} \bigr(G(\eta)\bigl)(x_j)|E\cap E_{\varepsilon,j}|\right|\leq \varepsilon \cdot |\Omega_1|
 \text{ for any }\eta\in \hat{\cB}.
\end{equation}
From the uniform Lipschitz property of $\bar{h}_x$ in the second item of Lemma \ref{lemma6}, we have
\begin{equation*}\label{1-50}
\int_{\Omega_1}\chi_E(x)\bigr(G(\cdot)\bigl)(x) dx\in C_b(\hat{\cB}), \qquad \bigr(G(\cdot)\bigl)(x_j)\in C_b(\hat{\cB}), \forall  j=1,\cdots,n_\varepsilon.
\end{equation*}
This, together with \eqref{1-49}, implies that
$$
\lim\limits_{\varepsilon\rightarrow0+}\left\|\int_{\Omega_1}\chi_E(x)\bigr(G(\cdot)\bigl)(x) dx-\sum\limits_{j=1}^{n_\varepsilon} \bigr(G(\cdot)\bigl)(x_j)|E\cap E_j|\right\|_{C_b(\hat{\cB})}=0.
$$
Since  $h\in C_b(\hat{\cB})^*$,  and
\begin{align*}
&\left|\int_{\Omega_1}\chi_E(x)h\Bigr(\bigr(G(\cdot)\bigl)(x)\Bigl) dx-
\sum\limits_{j=1}^{n_\varepsilon}h\Bigr( \bigr(G(\cdot)\bigl)(x_j)\Bigl)|E\cap E_j|\right|\\
=&\left|\sum\limits_{j=1}^{n_\varepsilon}\int_{E\cap E_j}
\left[h\Bigr(\bigr(G(\cdot)\bigl)(x)\Bigl)-
h\Bigr(\bigr(G(\cdot)\bigl)(x_j)\Bigl)\right]
dx\right|
\leq  \sum\limits_{j=1}^{n_\varepsilon}\int_{E\cap E_j}
\|h\|\varepsilon
 dx,
\end{align*}
which shows
\begin{equation}\label{bz5}
\begin{split}
&h\biggr(\int_{\Omega_1}\chi_E(x)\bigr(G(\cdot)\bigl)(x) dx\biggl)
=\lim\limits_{\varepsilon\rightarrow0+}h\biggr(\sum\limits_{j=1}^{n_\varepsilon} \bigr(G(\cdot)\bigl)(x_j)|E\cap E_j|\biggl)\\
=&\lim\limits_{\varepsilon\rightarrow0+}\sum\limits_{j=1}^{n_\varepsilon}h\Bigr( \bigr(G(\cdot)\bigl)(x_j)\Bigl)|E\cap E_j|
=\int_{\Omega_1}\chi_E(x)h\Bigr(\bigr(G(\cdot)\bigl)(x)\Bigl)dx.
\end{split}
\end{equation}

{\it  Step 2: Show \eqref{1.38}.}
 
 From (\ref{bz5}),  we find that  \eqref{1.38} holds whenever  $\beta$ is a  simple function, that is,
\begin{equation}\label{1-52}
h\biggr(\int_{\Omega_1}\beta(x)\bigr(G(\cdot)\bigl)(x) dx\biggl)=\int_{\Omega_1}\beta(x)h\Bigr(\bigr(G(\cdot)\bigl)(x)\Bigl) dx
 \text{ whenever  }\beta=\sum\limits_{k=1}^{m} a_k\chi_{E_k}.
\end{equation}
For any $\beta\in\cB$, there is a sequence of simple functions $\{\beta_n,n\in\mathbb{N}\}$  so that
$$
\lim\limits_{n\rightarrow\infty}\beta_n=\beta\quad\text{strongly in } L^1(\Omega).
$$
From \eqref{1.53}, we have
$$
\int_{\Omega_1}\beta(x)\bigr(G(\cdot)\bigl)(x) dx\in C_b(\hat{\cB}),\qquad
\int_{\Omega_1}\beta_n(x)\bigr(G(\cdot)\bigl)(x)dx\in C_b(\hat{\cB}), n=1,2,\cdots.
$$
Notice that
\begin{equation*}
\begin{split}
&\left\|\int_{\Omega_1}\left[\beta_n(x)-\beta(x)\right]\bigr(G(\cdot)\bigl)(x) dx\right\|_{C(\hat{\cB})}
=\sup\limits_{\eta\in\hat{\cB}}\left|\int_{\Omega_1}\left[\beta_n(x)-\beta(x)\right]\bigr(G(\eta)\bigl)(x)dx\right|\\
\leq&\int_{\Omega_1}\left|\beta_n(x)-\beta(x)\right|dx\cdot \sup\limits_{\eta\in\hat{\cB}}\|G(\eta)\|_{L^\infty(\Omega_1)}
=\int_{\Omega_1}\left|\beta_n(x)-\beta(x)\right|dx\cdot \sup\limits_{\eta\in\hat{\cB}}\|G(\eta)\|_{C(\Omega_1)},
\end{split}
\end{equation*}
and
$\{G(\eta)\mid \eta\in\hat{\cB}\}$ is uniformly bounded in $C(\overline{\Omega}_1)$ from \eqref{key-inequality} with $\gamma=0$. We then have
$$
\lim\limits_{n\rightarrow\infty}h\biggr(\int_{\Omega_1}\beta_n(x)\bigr(G(\cdot)\bigl)(x) dx\biggl)=h\biggr(\int_{\Omega_1}\beta(x)\bigr(G(\cdot)\bigl)(x) dx\biggl).
$$
On the other hand, it follows from \eqref{1.36} that
$$
\lim\limits_{n\rightarrow\infty}\int_{\Omega_1}\beta_n(x)h\Bigr(\bigr(G(\cdot)\bigl)(x)\Bigl) dx=\int_{\Omega_1}\beta(x)h\Bigr(\bigr(G(\cdot)\bigl)(x)\Bigl) dx.
$$
The  \eqref{1.38} then follows directly from \eqref{1-52}.
\end{proof}

\begin{lemma}\label{lemma10}
For any $h\in\cH(\hat{\cB})$, $H_h$   defined by \eqref{definition-of-H}  is analytic in $\Omega_1$.
\end{lemma}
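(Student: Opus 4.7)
The strategy is to show that $H_h$ is infinitely differentiable on $\Omega_1$ and that its derivatives satisfy a Cauchy estimate of the form $|H_h^{(\gamma)}(x)| \leq M\gamma!/\rho^\gamma$ with the very same $\rho$ appearing in \eqref{key-inequality}. This gives real analyticity in $\Omega_1$ by the standard Cauchy--Hadamard criterion. The technical content is to commute the derivative $\partial_x^\gamma$ with the action of the functional $h$, and this is made possible by the uniformity in $\eta$ built into \eqref{key-inequality}.

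For each $\gamma \in \mathbb{N}$ and $x \in \overline{\Omega}_1$, define $\Phi_\gamma(x) : \hat{\cB} \to \mathbb{R}$ by $\Phi_\gamma(x)(\eta) = (\partial_x^\gamma G(\eta))(x)$. First I would check, by an argument parallel to the proof of Lemma \ref{lemma6}(ii), that $\Phi_\gamma(x) \in C_b(\hat{\cB})$: continuity in $\eta$ comes from writing $\partial_x^\gamma G(\eta)$ as a Leibniz sum of products $\partial_x^{\gamma_1}\varphi(\cdot\,;\eta)\partial_x^{\gamma_2}\varphi(\cdot\,;\eta)$, which is a continuous quadratic form in $\eta$; boundedness (and a uniform estimate) comes directly from \eqref{key-inequality}, yielding
\begin{equation*}
\|\Phi_\gamma(x)\|_{C_b(\hat{\cB})} \leq TC_1^2 \rho^{-\gamma}\gamma!\,\delta_0^2
\qquad \text{for every }x\in\overline{\Omega}_1.
\end{equation*}

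Next I would show that the map $x \mapsto \Phi_\gamma(x)$, regarded as a $C_b(\hat{\cB})$-valued function, is differentiable with derivative $\Phi_{\gamma+1}(x)$. The idea is to apply Taylor's theorem with remainder pointwise in $\eta$: for $x, x+\Delta x \in \Omega_1$ and any $\eta \in \hat{\cB}$ there exists $\xi$ between $x$ and $x+\Delta x$ with
\begin{equation*}
\Phi_\gamma(x+\Delta x)(\eta) - \Phi_\gamma(x)(\eta) - \Delta x\,\Phi_{\gamma+1}(x)(\eta)
 = \tfrac{(\Delta x)^2}{2}\,\Phi_{\gamma+2}(\xi)(\eta),
\end{equation*}
and the bound \eqref{key-inequality} with $\gamma$ replaced by $\gamma+2$ makes the right-hand side uniformly $O(|\Delta x|^2)$ over $\eta\in\hat{\cB}$. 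Hence the difference quotient $(\Phi_\gamma(x+\Delta x) - \Phi_\gamma(x))/\Delta x$ converges to $\Phi_{\gamma+1}(x)$ in the norm of $C_b(\hat{\cB})$. Since $h \in C_b(\hat{\cB})^*$ is linear and continuous, we can pass $h$ through the limit to conclude that $x \mapsto h(\Phi_\gamma(x))$ is differentiable with derivative $h(\Phi_{\gamma+1}(x))$. Iterating the induction, $H_h = h(\Phi_0)$ is $C^\infty$ with $H_h^{(\gamma)}(x) = h(\Phi_\gamma(x))$ for every $\gamma \in \mathbb{N}$ and $x \in \Omega_1$.

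Finally, combining this identity with \eqref{norm} and the uniform bound on $\|\Phi_\gamma(x)\|_{C_b(\hat{\cB})}$ above yields
\begin{equation*}
\bigl|H_h^{(\gamma)}(x)\bigr| \leq \|h\|_{C_b(\hat{\cB})^*}\cdot\|\Phi_\gamma(x)\|_{C_b(\hat{\cB})} \leq TC_1^2\delta_0^2\,\rho^{-\gamma}\gamma!
\qquad \text{for all }x \in \overline{\Omega}_1,\,\gamma\in\mathbb{N},
\end{equation*}
which is precisely the Cauchy-type estimate needed to guarantee that the Taylor series of $H_h$ at any $x_0 \in \Omega_1$ has radius of convergence at least $\rho$, and coincides with $H_h$ inside that interval. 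Hence $H_h$ is analytic on $\Omega_1$. The delicate point throughout is the interchange of the limit defining $\partial_x$ with the action of $h$; this hinges entirely on \eqref{key-inequality}, which provides control of $\partial_x^{\gamma+2} G(\eta)$ that is independent of $\eta\in\hat{\cB}$ and thus produces $C_b(\hat{\cB})$-norm convergence of the difference quotients.
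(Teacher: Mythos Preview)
Your proof is correct and follows essentially the same approach as the paper: both prove by induction that $\partial_x^\gamma H_h(x) = h\bigl((\partial_x^\gamma G(\cdot))(x)\bigr)$ by showing, via Taylor's theorem and the uniform bound \eqref{key-inequality}, that the difference quotients of $\partial_x^\gamma G(\cdot)(x)$ converge in the $C_b(\hat{\cB})$-norm, then pass the continuous functional $h$ through the limit. The Cauchy estimate is then obtained exactly as you do, combining \eqref{key-inequality} with $\|\eta\|\leq\delta_0$ and $\|h\|_{C_b(\hat{\cB})^*}=1$.
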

\begin{proof} Let $x\in\Omega_1$.
We assert that the equality
\begin{equation}\label{definition-of-derivation}
\partial ^\gamma H_h(x)=\partial ^\gamma h\left(\bigl(G(\cdot)\bigr)(x)\right)= h\left(\left(\partial ^\gamma G(\cdot)\right)(x)\right)\quad\text{for all }\gamma\in \mathbb{N},
\end{equation}
holds, and we will prove it using mathematical induction.
It is evident that \eqref{definition-of-derivation} is valid when $\gamma=0$.
Now, assuming that \eqref{definition-of-derivation} holds for any $\gamma\leq k$, let us consider the case of $k+1$. 
Take
$x_1\in\O_1$ such that 
$$
x+\varepsilon x_1\in{\Omega_1}\qquad \text{when }\varepsilon>0\text{ small enough}.
$$
Let $\eta\in\hat{\cB}$. Since  $G(\eta)$ is analytic,
by differential mean value theorem,
\begin{equation*}
\begin{split}
&\left|\frac{1}{\varepsilon}\left[ \partial ^k\bigr(G(\eta)\bigl)(x+\varepsilon x_1)-\partial ^k \bigr(G(\eta)\bigl)(x)\right]-\partial ^{k+1} \bigr(G(\eta)\bigl)(x)\right|\\
\leq&\frac{\varepsilon}{2}\max\limits_{\mu\in [0,1]}\left|\partial ^{k+2} \bigr(G(\eta)\bigl)(x+\mu\varepsilon x_1)\right|.
\end{split}
\end{equation*}
By  \eqref{key-inequality}, it follows that
\begin{equation}\label{1.62}
\lim\limits_{\varepsilon\rightarrow0+}\left[
\frac{1}{\varepsilon}\left[ \partial ^k \bigr(G(\cdot)\bigl)(x+\varepsilon x_1)-\partial ^\gamma \bigr(G(\cdot)\bigl)(x)\right]-\partial ^{k+1} \bigr(G(\cdot)\bigl)(x)\right]=0
\quad\text{strongly in } C_b(\bar{\cB}).
\end{equation}

On the other hand, notice that
\begin{equation*}
	\begin{split}
&\left|\left[\partial ^{k+1} \bigr(G(\hat \eta)\bigl)-\partial ^{k+1} \bigr(G(\eta)\bigl)\right](x)\right|\\
\leq&  \int^{T}_\tau\left|\partial ^{k+1} \varphi^2(x,s;\hat \eta)-\partial ^{k+1} \varphi^2(x,s; \eta)\right|
ds\\
\leq&\sum\limits_{\gamma_1+\gamma_2= k+1} \int^{T}_\tau\left|\partial^{\gamma_1}\varphi(x,s;\hat \eta)\partial^{\gamma_2} \varphi(x,s;\hat \eta)-\partial^{\gamma_1}\varphi(x,s; \eta)\partial^{\gamma_2} \varphi(x,s; \eta)\right| ds\\
=& \sum\limits_{\gamma_1+\gamma_2=k+1} \int^{T}_\tau\left|\partial^{\gamma_1}\varphi(x,s;\hat \eta)\partial^{\gamma_2} \varphi(x,s;\hat \eta- \eta)+\partial^{\gamma_1}\varphi(x,s; \hat \eta-\eta)\partial^{\gamma_2} \varphi(x,s; \eta)\right| ds\\
\leq& 2\sum\limits_{\gamma_1+\gamma_2=k+1}\sqrt{ \int^{T}_\tau\left[\partial^{\gamma_1}\varphi(x,s;\hat \eta)\right]^2 ds}\sqrt{\int^{T}_\tau\left[\partial^{\gamma_2} \varphi(x,s;\hat \eta- \eta)\right]^2 ds}
\end{split}
\end{equation*}
for any $\hat \eta,\eta\in\hat{\cB}$.
 It follows from \eqref{1.28} that
 $$
 \partial ^{k+1} \bigr(G(\cdot)\bigl)(x)\in  C_b(\hat{\cB}).
$$
This, together with \eqref{1.62}, implies that
$$
\lim\limits_{\varepsilon\rightarrow0+}
\frac{1}{\varepsilon}\left[ \partial ^k\bigr(G(\cdot)\bigl)(x+\varepsilon x_1)-\partial ^k \bigr(G(\cdot)\bigl)(x)\right]=\partial ^{k+1} \bigr(G(\cdot)\bigl)(x)
\quad\text{strongly in } C_b(\hat{\cB}).
$$
Furthermore,
$$
\lim\limits_{\varepsilon\rightarrow0+}
h\left(\frac{1}{\varepsilon}\left[\bigr(\partial ^k G(\cdot)\bigl)(x+\varepsilon x_1)-\bigr(\partial ^k G(\cdot)\bigl)(x)\right]\right)
=
h\left(\partial ^{k+1} \bigr(G(\cdot)\bigl)(x)\right).
$$
Since
\begin{equation*}\label{1-54}
	\begin{split}
\partial ^{k+1} H_h(x)
&=\partial \bigr(\partial ^{k} H_h(x)\bigl)=\partial \Bigr(h\bigr(\bigr(\partial ^k G(\cdot)\bigl)(x)\bigl)\Bigl)\\
&=\lim\limits_{\varepsilon\rightarrow0+}
\frac{1}{\varepsilon}\left[h\Bigr(\bigr(\partial ^k G(\cdot)\bigl)(x+\varepsilon x_1)\Bigl)-h\Bigr(\bigr(\partial ^k G(\cdot)\bigl)(x)\Bigl)\right]\\
&=\lim\limits_{\varepsilon\rightarrow0+}
h\left(\frac{1}{\varepsilon}\left[\bigr(\partial ^k G(\cdot)\bigl)(x+\varepsilon x_1)-\bigr(\partial ^k G(\cdot)\bigl)(x)\right]\right),
\end{split}
\end{equation*}
the function $\partial ^{k} H_h$ is  differentiable, and \eqref{definition-of-derivation} holds when $k+1$. Hence, the claim \eqref{definition-of-derivation} is valid. Now, by utilizing \eqref{key-inequality} and (\ref{definition of B}), we obtain
$$
\left\vert\bigr  (\partial ^\gamma G(\eta)\bigl)(x)\right\vert\leq TC_1^2\rho^{-\gamma}\gamma!  \delta_0^2\qquad\text{for any } \eta\in \hat{\cB},\, x\in{\Omega_1},\,\gamma\in \mathbb{N},
$$
where $\delta_0$ is defined in \eqref{definition-of-delta}.
This, together with  \eqref{definition-of-derivation}, implies that $H_h$ is analytic in $\Omega_1$. The proof is completed.
\end{proof}

The following Lemma \ref{lemma11} is  well known  (see, e.g., \cite{privat2015complexity} or \cite{guo2015optimal}).
  \begin{lemma}\label{lemma11}
Let $\phi(\cdot)$ be analytic in $\Omega$. Then, the problem
$$
   \sup\limits_{ \beta\in\cB}\displaystyle\int_\Omega
 \beta(x)\phi(x) dx
$$
 admits a unique solution $\bar\beta$, and there is $\bar \omega\in W$ such that
\begin{equation*}
\bar\beta=\chi_{\bar \omega}.
\end{equation*}
Furthermore, $\bar \omega$ is an upper  level set of  $\phi$, i.e. there is $c\in\mathbb{R}$ so that
$$
\bar\omega=\left\{x\in \Omega\bigm| \phi(x)\geq c\right\}.
$$
  \end{lemma}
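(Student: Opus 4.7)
The plan is to first settle existence via the standard direct method: the set $\cB$ is weakly-$*$ closed, bounded, and convex in $L^\infty(\Omega_1)$, hence weakly-$*$ compact by Banach-Alaoglu. The linear functional $\beta\mapsto\int\beta\phi$ is weakly-$*$ continuous (since $\phi\in L^1\cap L^\infty$ on the bounded set $\Omega_1$), so a maximizer $\bar\beta\in\cB$ exists. The remaining work is to extract the bang-bang structure and the upper-level-set characterization, and then conclude uniqueness.

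The crucial analytic input I would use is that if $\phi$ is analytic and not identically constant on $\Omega$, then every level set $\{x\in\Omega_1:\phi(x)=t\}$ has Lebesgue measure zero. (The constant case is trivial: every $\bar\omega\in W$ is optimal and is an upper level set with $c$ equal to the constant value.) Assuming from now on $\phi$ is non-constant, I would prove the bang-bang property by a swap argument. Let $A=\{x\in\Omega_1:0<\bar\beta(x)<1\}$, and suppose for contradiction $|A|>0$. By the level-set fact, $\phi|_A$ is not a.e. constant, so there exists $t\in\mathbb{R}$ such that $A^+:=A\cap\{\phi>t\}$ and $A^-:=A\cap\{\phi<t\}$ each have positive measure. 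Choose measurable subsets $F^\pm\subseteq A^\pm$ of equal positive measure and, for small $\varepsilon>0$, define $\tilde\beta=\bar\beta+\varepsilon(\chi_{F^+}-\chi_{F^-})$. Then $\tilde\beta\in\cB$ and $\int\tilde\beta\phi-\int\bar\beta\phi=\varepsilon\int_{F^+}\phi\,dx-\varepsilon\int_{F^-}\phi\,dx>0$ by construction, contradicting optimality. Hence $\bar\beta\in\{0,1\}$ a.e., so $\bar\beta=\chi_{\bar\omega}$ for some $\bar\omega\in W$.

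Next I would identify $\bar\omega$ as an upper level set. Define
\[
c=\sup\bigl\{t\in\mathbb{R}\,:\,\bigl|\{x\in\Omega_1:\phi(x)\geq t\}\bigr|\geq \lambda|\Omega_1|\bigr\}.
\]
If there were positive-measure portions of $\{\phi>c\}\cap\Omega_1$ missing from $\bar\omega$ and positive-measure portions of $\{\phi<c\}\cap\Omega_1$ included in $\bar\omega$, a swap of equal measure between these sets would strictly increase $\int\chi_{\bar\omega}\phi$, again contradicting optimality. Therefore, modulo null sets, $\{\phi>c\}\cap\Omega_1\subseteq\bar\omega\subseteq\{\phi\geq c\}\cap\Omega_1$. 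Because $|\{\phi=c\}\cap\Omega_1|=0$ by analyticity, the two containments coincide and force $\bar\omega=\{\phi\geq c\}\cap\Omega_1$ up to a null set, yielding uniqueness as well.

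The main obstacle, and the only place where analyticity is essential, is the measure-zero property of level sets: without it, the bang-bang swap could fail (if $\phi$ were constant on $A$) and the threshold $c$ would not pin down $\bar\omega$ uniquely (because $\{\phi=c\}$ could have positive measure, giving freedom in how to fill the remaining mass $\lambda|\Omega_1|-|\{\phi>c\}\cap\Omega_1|$). Everything else is a standard exchange argument, so the proof reduces to invoking the analytic level-set lemma at these two points.
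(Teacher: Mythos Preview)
The paper does not prove this lemma; it quotes it as well known, citing \cite{privat2015complexity} and \cite{guo2015optimal}. Your outline---weak-$*$ compactness of $\cB$ for existence, an exchange argument on $\{0<\bar\beta<1\}$ to force the bang-bang structure, a second exchange against the threshold $c$ to identify $\bar\omega$, all driven by the fact that a non-constant real-analytic function has level sets of Lebesgue measure zero---is exactly the standard argument behind those references and is correct. Two small points you could tighten: (i) to guarantee $\tilde\beta\in[0,1]$ you should first restrict $F^\pm$ to some $\{1/n\leq\bar\beta\leq 1-1/n\}$ of positive measure before fixing $\varepsilon$; (ii) after the threshold swap you have only ruled out ``both $A$ and $B$'' (where $A$ is positive mass of $\{\phi>c\}$ missing from $\bar\omega$ and $B$ is positive mass of $\{\phi<c\}$ inside $\bar\omega$); to conclude ``neither'' you should invoke $|\{\phi\geq c\}\cap\Omega_1|=\lambda|\Omega_1|=|\bar\omega|$, which makes $A\Leftrightarrow B$. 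Finally, in the constant case you correctly note that every $\bar\omega\in W$ is optimal, but then in fact every $\beta\in\cB$ is a maximizer and the uniqueness assertion fails---this is a defect of the lemma's statement (it tacitly requires $\phi$ non-constant for uniqueness), not a gap in your proof.
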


Now, we are ready to prove Theorem \ref{main-theorem}.

 \begin{proof}[{Proof of Theorem \ref{main-theorem}}]
 According to Corollary \ref{remark1}, it can be observed that {\bf (ROP)} possesses at least one solution, denoted by $\beta^*$. Based on Proposition \ref{Neumann}, we can conclude that there exists $h^*\in \cH(\hat{\cB})$ that solves {\bf (SP4)}, while $\beta^*$ solves the optimization problem given by
$$
\sup\limits_{\beta\in\cB}\widetilde J(\beta,h^*).
$$
This, together with the definition of $\widetilde J(\cdot,\cdot)$ and Lemma \ref{lemma9}, implies that
$\beta^*$ solves
\begin{equation*}
\sup\limits_{\beta\in\cB}\int_{\Omega_1}\beta(x)H_{h^*}(x) dx=\sup\limits_{\beta\in\cB}\int_{\Omega_1}\beta(x)h^*\Bigr(\bigr(G(\cdot)\bigl)(x)\Bigl)dx.
\end{equation*}

Based on Lemma \ref{lemma10}, we establish that $H_{h^*}$ possesses analyticity. By applying Lemma \ref{lemma11}, we can conclude that there exists $\omega^*\in W$ such that
\begin{equation*}\label{1-56}
\beta^*=\chi_{\omega^*}.
\end{equation*}
This observation indicates that the relaxed solution to {\bf (ROP)} also serves as a solution to {\bf (OP)}, thereby demonstrating that {\bf (OP)} possesses at least one solution.
Moreover, employing Lemma \ref{lemma11} once more, we ascertain that $\beta^*$ must represent an upper-level set of some analytic function. Hence, the proof is concluded.
\end{proof}

\section{Declarations}
The authors have not disclosed any competing interests and data availability.

\section{Acknowledgement}

This work was supported by the National Natural Science Foundation of China, the Science Technology Foundation of Hunan Province.

\bibliographystyle{abbrvnat}
\bibliography{ref.bib}
\end{document}